\def\bfB{\mathbf{B}}
\DeclareMathOperator{\id}{\operatorname{id}}
\DeclareMathOperator{\Mat}{\operatorname{M}}
\DeclareMathOperator{\Matt}{\operatorname{T}}
\DeclareMathOperator{\Hom}{\operatorname{Hom}}
\DeclareMathOperator{\Ker}{\operatorname{Ker}}
\DeclareMathOperator{\Mats}{\operatorname{S}}
\DeclareMathOperator{\MatT}{\operatorname{T}}
\DeclareMathOperator{\End}{\operatorname{End}}
\DeclareMathOperator{\GL}{\operatorname{GL}}
\DeclareMathOperator{\Vect}{\operatorname{span}}
\DeclareMathOperator{\im}{\operatorname{Im}}
\DeclareMathOperator{\tr}{\operatorname{tr}}
\DeclareMathOperator{\car}{\operatorname{char}}
\renewcommand{\setminus}{\smallsetminus}
\renewcommand{\epsilon}{\varepsilon}
\def\F{\mathbb{F}}
\def\R{\mathbb{R}}
\def\C{\mathbb{C}}
\def\calF{\mathcal{F}}
\def\calH{\mathcal{H}}
\def\calM{\mathcal{M}}
\def\calS{\mathcal{S}}
\def\calT{\mathcal{T}}
\def\calW{\mathcal{W}}
\def\calX{\mathcal{X}}
\def\calY{\mathcal{Y}}
\def\calZ{\mathcal{Z}}
\def\lcro{\mathopen{[\![}}
\def\rcro{\mathclose{]\!]}}
\theoremstyle{definition}
\newtheorem{Def}{Definition}[section]
\newtheorem{Not}[Def]{Notation}
\theoremstyle{plain}
\newtheorem{theo}{Theorem}[section]
\newtheorem{prop}[theo]{Proposition}
\newtheorem{lemma}[theo]{Lemma}
\newtheorem{claim}{Claim}[section]
\theoremstyle{plain}
\theoremstyle{remark}
\newtheorem{Rems}{Remarks}[section]
\newtheorem{Rem}[Rems]{Remark}
\title{Spaces of triangularizable matrices (II): Finite fields with odd characteristic}
\author{Cl\'ement de Seguins Pazzis\footnote{Universit\'e de Versailles Saint-Quentin-en-Yvelines, Laboratoire de Math\'ematiques
de Versailles, 45 avenue des Etats-Unis, 78035 Versailles cedex, France}
\footnote{e-mail address: clement.de-seguins-pazzis@ac-versailles.fr}}
\begin{document}

\thispagestyle{plain}

\maketitle
\begin{abstract}
Let $\F$ be a field. Denote by $t_n(\F)$ the greatest possible dimension for a vector space of $n$-by-$n$ matrices over $\F$
in which every element is triangularizable over $\F$.

It was recently proved that $t_n(\F)=\frac{n(n+1)}{2}$ if and only if $\F$ is not quadratically closed.
The structure of the spaces of maximal dimension was also elucidated provided $\F$ is infinite and not quadratically closed.
In this sequel, we extend this result to finite fields with odd characteristic. More specifically, we prove that if $\F$ is finite with odd characteristic, then the space of all upper-triangular $n$-by-$n$ matrices
is, up to conjugation, the sole vector space of $n$-by-$n$ matrices that has dimension $\frac{n(n+1)}{2}$
and consists only of triangularizable matrices.
\end{abstract}

\vskip 2mm
\noindent
\emph{AMS MSC:} 15A30, 15A03, 15A18

\vskip 2mm
\noindent
\emph{Keywords:} triangularization, spectrum, spaces of matrices, dimension, finite fields


\section{Introduction}

\subsection{The problem, and the main result}

Let $\F$ be a field. We denote by $\Mat_n(\F)$ the algebra of all $n$-by-$n$ matrices with entries in $\F$, by $\Matt_n(\F)$
its subalgebra of all upper-triangular matrices, and by $\mathfrak{sl}_n(\F)$ the linear subspace of all trace zero matrices.

A subset $\calX$ of $\Mat_n(\F)$ is called \textbf{weakly triangularizable} when all its elements are triangularizable over $\F$,
i.e., conjugated to an element of $\Matt_n(\F)$, or equivalently when each element of $\calX$ is annihilated by a split polynomial with coefficients in $\F$.
We adopt a similar definition for subsets of $\End(V)$ when $V$ is a finite-dimensional vector space over $\F$.

The study of weakly triangularizable subspaces can be viewed as a variation on the one of linear subspaces of nilpotent matrices, initiated by Murray Gerstenhaber \cite{Gerstenhaber},
where nilpotence can be seen as short for ``similar to a strictly upper-triangular matrix".
Strikingly, meaningful results on weakly triangularizable subspaces have emerged only very recently, a plausible reason being that
the solution intimately depends on the arithmetic properties of
the underlying field. For example, every subset of $\Mat_n(\C)$ is weakly triangularizable, but in $\Mat_n(\R)$ the maximal possible dimension for a weakly triangularizable
subspace is $\frac{n(n+1)}{2}$, which is precisely $\dim \Matt_n(\R)$:
for an efficient proof of the latter fact, simply note that among the skew-symmetric real matrices the only one that is triangularizable over the reals is the zero matrix
(since all these matrices have pure imaginary eigenvalues), and that the dimension of the space of all $n$-by-$n$ skew-symmetric real matrices is $\frac{n(n-1)}{2}\cdot$

Naturally, the questions that are raised for weakly triangularizable subspaces are the analogues of the ones that are raised in the study of linear or affine subspaces of matrices
with a distinctive individual property:
\begin{enumerate}[(a)]
\item What is the greatest possible dimension $t_n(\F)$ for such a linear subspace?
\item Can one give a constructive description of the weakly triangularizable subspaces with dimension $t_n(\F)$, which we call the
\textbf{optimal} weakly triangularizable subspaces?
\end{enumerate}

In any case, it is clear that $t_n(\F)$ is greater than or equal to $\frac{n(n+1)}{2}$,
the dimension of $\Matt_n(\F)$, and it turns out that, excluding finite fields with characteristic $2$, there is a
simple characterization of the fields for which $t_n(\F)=\frac{n(n+1)}{2}$:

\begin{theo}[Theorem 1.1 in \cite{dSPtriang}]\label{theo:dimNRC}
Let $n \geq 2$. Assume that $\F$ is infinite or $\car(\F) \neq 2$.
Then the following conditions are equivalent:
\begin{enumerate}[(i)]
\item The greatest possible dimension for a weakly triangularizable linear subspace of $\Mat_n(\F)$ is $\frac{n(n+1)}{2}\cdot$
\item The field $\F$ is not quadratically closed.
\end{enumerate}
\end{theo}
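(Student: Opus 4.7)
The plan is to prove the two implications separately, relying throughout on the automatic lower bound $t_n(\F) \geq \frac{n(n+1)}{2}$ coming from $\Matt_n(\F)$.

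For the contrapositive of $(i) \Rightarrow (ii)$, assume $\F$ is quadratically closed. I would exhibit the weakly triangularizable subspace
\[
W = \left\{\begin{pmatrix} T & C \\ 0 & B \end{pmatrix} : T \in \Matt_{n-2}(\F),\ C \in \Mat_{(n-2) \times 2}(\F),\ B \in \Mat_2(\F)\right\}
\]
of $\Mat_n(\F)$. The characteristic polynomial of each element factors as $\chi_T(x)\,\chi_B(x)$; the first factor splits because $T$ is upper-triangular and the second because every quadratic polynomial in $\F[x]$ splits over $\F$. A direct count gives $\dim W = \frac{(n-2)(n-1)}{2} + 2(n-2) + 4 = \frac{n(n+1)}{2} + 1$, whence $t_n(\F) > \frac{n(n+1)}{2}$.

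For $(ii) \Rightarrow (i)$, fix an irreducible monic quadratic $q \in \F[x]$ and argue by induction on $n$ that every weakly triangularizable subspace $V \subseteq \Mat_n(\F)$ satisfies $\dim V \leq \frac{n(n+1)}{2}$. For $n = 2$, a $4$-dimensional weakly triangularizable subspace would equal $\Mat_2(\F)$ and thus contain the companion matrix of $q$, which is not triangularizable, a contradiction. For the inductive step, the crux is to find a proper nonzero $V$-invariant subspace $L \subseteq \F^n$. Once $L$ is available, setting $k = \dim L$ and conjugating places $V$ inside the block form $\begin{pmatrix} A & B \\ 0 & C \end{pmatrix}$ with $A$ of size $k\times k$; the image of $V$ under the $A$-projection is weakly triangularizable in $\Mat_k(\F)$, and the kernel of that projection projects to a weakly triangularizable subspace of $\Mat_{n-k}(\F)$, so the induction hypothesis combined with rank-nullity yields
\[
\dim V \leq \frac{k(k+1)}{2} + \frac{(n-k)(n-k+1)}{2} + k(n-k) = \frac{n(n+1)}{2}.
\]

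The main obstacle is securing the common invariant subspace $L$ in the inductive step. When $\F$ is infinite, I would study the incidence set
\[
\calE = \{(M,v) \in V \times (\F^n \setminus \{0\}) : Mv \in \F v\},
\]
which projects surjectively onto $V$ because each $M \in V$ has an eigenvalue in $\F$. Combining upper semicontinuity of fiber dimensions, constructibility, and the hypothesis $\dim V > \frac{n(n+1)}{2}$, one aims to extract a vector $v$ whose fiber in $\calE$ is the whole of $V$, i.e.\ a common eigenvector of $V$. Non-quadratic-closedness of $\F$ serves to rule out would-be eigenvectors that would exist only over a quadratic extension. For a finite field of odd characteristic---the setting of the present sequel---this geometric reasoning must be replaced by delicate combinatorial counting of (matrix, eigenvector) pairs, which is the chief technical novelty of the paper.
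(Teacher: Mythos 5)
The paper does not actually prove Theorem~\ref{theo:dimNRC}; it is cited from \cite{dSPtriang}, where (per the Strategy subsection) it is established by a double-duality method that connects weakly triangularizable spaces to spaces of bounded-rank matrices and invokes Atkinson's theorem, with Proposition~\ref{prop:adapted} (the existence of an \emph{adapted vector}) identified as the ``major key'' of that proof. Your proposal takes a different route, so let me assess it on its own terms.

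The direction $(i)\Rightarrow(ii)$ (by contraposition) is correct: your block space $W$ is weakly triangularizable because the characteristic polynomial factors through the diagonal blocks, and the dimension count $\frac{(n-2)(n-1)}{2}+2(n-2)+4=\frac{n(n+1)}{2}+1$ checks out. This is a legitimate, elementary construction and probably similar in spirit to what \cite{dSPtriang} does for that implication.

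The direction $(ii)\Rightarrow(i)$ has a genuine gap at exactly the place you flag as ``the crux.'' You posit that a weakly triangularizable subspace $V$ of dimension exceeding $\frac{n(n+1)}{2}$ must admit a proper nonzero common invariant subspace (equivalently, after dualizing, a common eigenvector), but this is never established, and the incidence-variety sketch does not close it: surjectivity of $\calE\to V$ only gives $\dim\calE\geq\dim V+1$, and there is no reason the fiber over some $v$ should be all of $V$. Worse, the approach is likely unfixable in this form. Weakly triangularizable spaces of dimension exactly $\frac{n(n+1)}{2}$ can act irreducibly (e.g.\ $\Mats_n(\R)$), so irreducibility is not incompatible with large dimension, and one would still need a nontrivial argument to rule it out above the threshold. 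The published proof avoids this obstacle entirely by working with \emph{adapted vectors} rather than common eigenvectors: Proposition~\ref{prop:adapted} guarantees, for every weakly triangularizable $\calS\subseteq\End(V)$ over a non-quadratically-closed field, a vector $x$ such that $\calS$ has no rank-one trace-zero element with range $\F x$. This much weaker property always exists, and it already suffices to run the inductive dimension count $\dim\calS\leq (n-1)+1+\dim\calT$ carried out in Section~\ref{section:induction:start}, because the condition bounds $\dim(\calS\cap\Hom(V,\F x))$ by~$1$ without requiring any invariant subspace at all. Your rank-nullity computation, conditional on the invariant subspace, is fine, but the existence of that subspace is the entire difficulty, and the adapted-vector device is precisely what replaces it.

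Finally, your closing remark misdescribes the present paper: the sequel does not re-prove Theorem~\ref{theo:dimNRC} for finite fields by ``combinatorial counting of (matrix, eigenvector) pairs.'' It accepts the dimension bound from \cite{dSPtriang} and instead classifies the \emph{optimal} subspaces (Theorem~\ref{theo:main}), via adapted vectors, a polynomial lemma (Lemma~\ref{lemma:pol}), and a matrix-by-matrix analysis in a carefully chosen basis.
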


A close examination of the proof of this result in \cite{dSPtriang} reveals that it also holds for
finite fields with characteristic $2$ provided that their cardinality is greater than or equal to $n$,
and we conjecture that the result actually holds for all fields.

To avoid any confusion, let us be clear that to us a field $\F$ is quadratically closed when it has no algebraic extension of degree $2$,
which is equivalent to the surjectivity of $x \in \F \mapsto x^2 \in \F$ if $\car(\F) \neq 2$, but not if $\car(\F) = 2$.
Hence, according to this definition no finite field, even of characteristic $2$, is quadratically closed.

From now on, we only consider fields that are not quadratically closed.
For infinite fields (and, once again, a close examination of the proofs of \cite{dSPtriang} reveals that the milder assumption $|\F| \geq n$ is sufficient for the proofs to
hold), the optimal weakly triangularizable subspaces have been essentially determined.
To state the results, we need to recall two notations:
\begin{itemize}
\item The one of similarity between matrix spaces: two subsets $\calX$ and $\calY$ of $\Mat_n(\F)$ are called similar, and we write $\calX \simeq \calY$, when
$\calX=P \calY P^{-1}$ for some $P \in \GL_n(\F)$ ; every subset that is similar to a weakly triangularizable subspace is weakly triangularizable.
\item The one of the joint of subsets $\calX_1 \subseteq \Mat_{n_1}(\F),\dots, \calX_p \subseteq \Mat_{n_p}(\F)$ of square matrices (potentially with different sizes),
denoted by $\calX_1 \vee \cdots \vee \calX_p$ and consisting of all the block upper-triangular matrices of the form
$$\begin{bmatrix}
X_1 & & [?] \\
& \ddots &  \\
[0] & & X_p
\end{bmatrix}$$
where $X_1 \in \calX_1,\dots,X_p \in \calX_p$ and the blocks above the diagonal are entirely arbitrary.
Note that if $\calX_1,\dots,\calX_p$ are weakly triangularizable subspaces, so is their joint.
\end{itemize}

Denote by $\Mats_n(\F)$ the space of all $n$-by-$n$ symmetric matrices with entries in $\F$.
The following result combines theorems 1.2 and 1.3 from \cite{dSPtriang}:

\begin{theo}\label{theo:classNRC}
Let $\F$ be an infinite field such that $x \in \F \mapsto x^2 \in \F$ is nonsurjective.
Let $\calS$ be a weakly triangularizable linear subspace of $\Mat_n(\F)$ with dimension $\frac{n(n+1)}{2}\cdot$
Then there exists a (unique) partition $n=n_1+\cdots+n_p$ such that $\calS \simeq \Mats_{n_1}(\F) \vee \cdots \vee \Mats_{n_p}(\F)$.
\end{theo}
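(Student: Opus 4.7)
The plan is to induct on $n$ via a dichotomy on whether $\calS$ admits a common nontrivial invariant subspace of $\F^n$. The base case $n=1$ is trivial, as $\Mats_1(\F)=\F=\Mat_1(\F)$.

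In the \emph{reducible step}, suppose $V \subseteq \F^n$ is $\calS$-invariant with $0<\dim V=k<n$. In an adapted basis, $\calS$ becomes block upper-triangular, and the two diagonal projections yield weakly triangularizable subspaces $\calS_1 \subseteq \Mat_k(\F)$ and $\calS_2 \subseteq \Mat_{n-k}(\F)$ (the characteristic polynomial of each diagonal block divides the split characteristic polynomial of its parent). By Theorem \ref{theo:dimNRC}, $\dim \calS_i$ is bounded by the corresponding triangular number. Since the kernel of the diagonal projection $\calS \to \calS_1 \oplus \calS_2$ is contained in the $k(n-k)$-dimensional space of off-diagonal blocks,
\[
\dim \calS \;\leq\; \dim \calS_1 + \dim \calS_2 + k(n-k) \;\leq\; \tfrac{n(n+1)}{2}.
\]
Equality forces each inequality to be tight: $\calS_1$ and $\calS_2$ are themselves optimal, the diagonal projection is surjective onto $\calS_1 \oplus \calS_2$, and the off-diagonal part of $\calS$ fills the entire rectangular block. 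Applying the inductive hypothesis to $\calS_1$ and $\calS_2$ and absorbing the full off-diagonal block delivers the desired joint decomposition.

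The \emph{irreducible step} is the main obstacle: $\calS$ has no common nontrivial invariant subspace, and one must directly show $\calS \simeq \Mats_n(\F)$. My plan is to produce a nondegenerate symmetric bilinear form on $\F^n$ with respect to which every $s \in \calS$ is self-adjoint. Since $\F$ is infinite and every $s \in \calS$ has a split characteristic polynomial, a Zariski-generic $a_0 \in \calS$ should have $n$ distinct eigenvalues in $\F$ (the alternative, that the discriminant polynomial vanishes identically on $\calS$, must be ruled out using irreducibility). In an eigenbasis of such $a_0$, the space of matrices $B$ with $B a_0 = a_0^T B$ is explicitly $n$-dimensional; intersecting with the analogous conditions for the remaining $s \in \calS$ and extracting a symmetric component produces a nonzero $B$, whose kernel is automatically $\calS$-invariant and hence trivial, so that $B$ is nondegenerate. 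A dimension count then identifies $\calS$ with the full space of $B$-self-adjoint operators. Finally, a Witt-type argument, using that this whole space must remain weakly triangularizable, pins the congruence class of $B$ to that of the standard form, and a change of basis yields $\calS \simeq \Mats_n(\F)$. The uniqueness of the partition follows from a Krull--Schmidt-type analysis of the lattice of minimal common invariant subspaces of $\calS$.
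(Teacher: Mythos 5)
Your reducible step is sound: the dimension count in a block-triangular basis does force $\calS_1$, $\calS_2$ to be optimal, the pair-projection to be surjective onto $\calS_1\times\calS_2$, and the off-diagonal block to be full, whence $\calS\simeq\calS_1\vee\calS_2$ after a block-diagonal conjugation. The uniqueness of the partition is also accessible because (as the paper notes, citing the appendix of \cite{dSPtriang}) the $\calS$-invariant subspaces of an optimal weakly triangularizable subspace form a chain, so the lengths $n_i$ are intrinsic.

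The irreducible step, however, is where the proof has genuine gaps, and you correctly identify it as the crux but then replace argument with assertion. Two points in particular are not established. First, that a Zariski-generic $a_0\in\calS$ has $n$ distinct eigenvalues: you acknowledge you must rule out that the discriminant vanishes identically on $\calS$ ``using irreducibility,'' but no mechanism is given, and irreducibility of $\calS$ as a set of operators does not obviously preclude every element from having a repeated eigenvalue. Second, and more seriously, after diagonalizing $a_0$, the matrices $B$ with $Ba_0=a_0^TB$ form the $n$-dimensional diagonal subspace, and you then claim that imposing $Bs=s^TB$ for all remaining $s\in\calS$ leaves something nonzero. There is no reason given why this linear system is solvable. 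Schur-type arguments that make such intertwiner spaces nonzero apply to algebras or group representations, not to a bare linear subspace $\calS$; asserting the existence of a nonzero self-adjointness form here is essentially asserting the conclusion of the theorem. The concluding ``Witt-type argument'' restricting the congruence class of $B$ is again only named, not carried out.

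For comparison: the paper does not prove this statement here; it quotes it from \cite{dSPtriang}, and explains that the proof there proceeds by a double-duality method, reducing to the classification of (semi-)primitive spaces of bounded rank matrices via Atkinson's theorem. That is a quite different route from yours, and the fact that the published proof detours through bounded-rank spaces rather than directly producing an invariant bilinear form is a strong indication that your direct approach to the irreducible case requires a substantial additional idea that the present sketch does not supply.
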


This theorem however does not give the full description of the optimal weakly triangularizable subspaces,
which requires that we have access to the integers $k \geq 2$ for which every matrix of $\Mats_k(\F)$
is triangularizable. It is clear that if $\Mats_k(\F)$ is weakly triangularizable, then so is $\Mats_i(\F)$ for all $i \in \lcro 2,k\rcro$.
Moreover, it is known (this is an easy exercise) that when $\car(\F) \neq 2$, the space
$\Mats_2(\F)$ is weakly triangularizable if and only if $\F$ is Pythagorean (i.e., in $\F$ the sum of any two squares is a square).
In particular, if $\F$ is infinite, non-Pythagorean, and with characteristic other than $2$,
then the conclusion of Theorem \ref{theo:classNRC} is that $\Matt_n(\F)$ is, up to conjugation, the sole
 weakly triangularizable subspace of $\Mat_n(\F)$ with dimension $\frac{n(n+1)}{2}\cdot$

In contrast, all matrix spaces $\Mats_k(\R)$ are weakly triangularizable,
and hence there are as many conjugacy classes of optimal weakly triangularizable subspaces of $\Mat_n(\R)$ as there are ordered partitions of the integer $n$.

For perfect fields with characteristic $2$, the result is substantially different but, in a way, simpler.
The following result combines theorems 1.2 and 1.5 from \cite{dSPtriang}:

\begin{theo}\label{theo:classperfect}
Let $\F$ be a perfect infinite field with $\car(\F) = 2$. Assume that $\F$ is not quadratically closed.
Let $\calS$ be a weakly triangularizable linear subspace of $\Mat_n(\F)$ with dimension $\frac{n(n+1)}{2}\cdot$
Then $\calS$ is conjugated to a joint $\calS_1 \vee \cdots \vee \calS_p$, where each $\calS_i$ equals either $\Mat_1(\F)$ or $\mathfrak{sl}_2(\F)$.
\end{theo}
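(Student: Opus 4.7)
My plan is to reduce the problem to a classification of indecomposable building blocks. First, I would invoke Theorem~1.2 of \cite{dSPtriang}, which asserts that any optimal weakly triangularizable subspace $\calS \subseteq \Mat_n(\F)$ is similar to a joint $\calS_1 \vee \cdots \vee \calS_p$, where each summand $\calS_i \subseteq \Mat_{n_i}(\F)$ is itself optimal weakly triangularizable and \emph{indecomposable}, i.e.\ not similar to any nontrivial joint. This reduces the theorem to proving that, under the stated hypotheses on $\F$, every indecomposable optimal weakly triangularizable subspace $\calT \subseteq \Mat_m(\F)$ satisfies $m \leq 2$ and is conjugate to either $\Mat_1(\F)$ or $\mathfrak{sl}_2(\F)$.

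The case $m=1$ is trivial. For $m=2$, first verify that $\mathfrak{sl}_2(\F)$ is weakly triangularizable: a direct computation shows that every $X \in \mathfrak{sl}_2(\F)$ satisfies $X^2 = \det(X)\,I$ in characteristic $2$; since $\F$ is perfect, $\det(X) = \lambda^2$ for some $\lambda \in \F$, whence $(X - \lambda I)^2 = X^2 + \lambda^2 I = 0$ and $X$ is triangularizable with $\lambda$ as sole eigenvalue. Its dimension $3 = \frac{2 \cdot 3}{2}$ is optimal. To finish the $m=2$ analysis I would classify $3$-dimensional subspaces of $\Mat_2(\F)$ up to conjugation: the existence of a common eigenvector forces $\calT \simeq \Mat_1(\F) \vee \Mat_1(\F)$, contradicting indecomposability; in its absence, a direct case analysis excludes the remaining possibilities using the assumption that $\F$ is not quadratically closed (which furnishes an irreducible Artin--Schreier polynomial $X^2 + X + t$ and thereby rules out $3$-dimensional subspaces containing elements whose characteristic polynomial is such an irreducible quadratic).

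The main obstacle is to rule out indecomposable optimal weakly triangularizable subspaces in size $m \geq 3$. I would proceed by contradiction, supposing $\calT \subseteq \Mat_m(\F)$ is indecomposable and optimal with $m \geq 3$. A natural approach is to isolate the nilpotent part of $\calT$, for instance by considering $\calT \cap \mathfrak{sl}_m(\F)$ or by extracting pure nilpotents via Jordan--Chevalley decompositions (available because $\calT$ is weakly triangularizable), so as to produce a large subspace of nilpotent matrices. A Gerstenhaber-type argument should then yield a common invariant subspace, and the infiniteness of $\F$, via a Zariski-density argument, should promote it to an invariant subspace for \emph{all} of $\calT$. The subtle point, which I expect to be the main difficulty, is to upgrade this common invariant subspace to a genuine joint decomposition: one must show that the lower off-diagonal block vanishes, using the existence of an irreducible Artin--Schreier polynomial $X^2 + X + t$ to exclude off-block couplings, for otherwise one could construct an element of $\calT$ whose minimal polynomial is divisible by $X^2 + X + t$, violating weak triangularizability. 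Once the joint structure is established, the assumption $m \geq 3$ is incompatible with indecomposability, completing the proof.
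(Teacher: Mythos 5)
The statement you are trying to prove is not proved in the present paper at all: it is quoted verbatim as a combination of Theorems~1.2 and 1.5 from the prior article \cite{dSPtriang}, which is why no argument appears in the text. So there is no ``paper's own proof'' to compare against here; I can only assess your proposal on its own merits.

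Your reduction to indecomposable optimal weakly triangularizable blocks, and the identification of the $m\leq 2$ blocks, are plausible and mostly correct. In particular the computation that $X^2 = \det(X)\,I$ for $X \in \mathfrak{sl}_2(\F)$ in characteristic $2$, and that perfectness then furnishes $\lambda$ with $(X+\lambda I)^2 = 0$, is clean and right. The $m=2$ classification (common eigenvector forces $\Matt_2(\F) = \Mat_1(\F)\vee\Mat_1(\F)$; otherwise one must land on $\mathfrak{sl}_2(\F)$) is stated but the ``direct case analysis'' is not carried out, and the role of the Artin--Schreier polynomial $X^2+X+t$ there is only gestured at.

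The genuine gap is the entire case $m\geq 3$. You write that one should ``isolate the nilpotent part,'' apply ``a Gerstenhaber-type argument'' to get a common invariant subspace, ``promote it via Zariski density,'' and then ``upgrade this to a joint decomposition'' using the Artin--Schreier polynomial. None of these steps is made precise, and the last one --- passing from an invariant subspace to a genuine joint by killing the lower off-diagonal block --- is exactly where the real difficulty lies (as the present paper's own elaborate inductive argument over finite fields of odd characteristic makes vivid: getting from a flag of invariant subspaces to a true triangular form occupies the entire Section~\ref{section:induction}). In characteristic $2$ there are additional pitfalls (for instance the trace form $\tr(uv)$ used in Section~\ref{section:n=2} degenerates, and Jordan--Chevalley decomposition interacts differently with perfectness), so you cannot simply transplant the skeleton from the odd-characteristic case. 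As written, the proposal is a plan, not a proof, and the hard half of the plan has not been executed.
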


In contrast with Theorem \ref{theo:classNRC}, it is very easy to see that each space $\Mat_1(\F)$ and $\mathfrak{sl}_2(\F)$
is weakly triangularizable if $\F$ is perfect with characteristic $2$, and hence any joint of such spaces is weakly triangularizable.

The aim of the present work is to extend Theorem \ref{theo:classNRC} to any finite field with odd characteristic.
In that case the statement of the result is even simpler since no finite field of odd characteristic is Pythagorean, and hence we are simply aiming at proving
the following result:

\begin{theo}[Main theorem]\label{theo:main}
Let $\F$ be a finite field with $\car(\F) \neq 2$.
Then every weakly triangularizable linear subspace of $\Mat_n(\F)$ with dimension $\frac{n(n+1)}{2}$ is similar to $\Matt_n(\F)$.
\end{theo}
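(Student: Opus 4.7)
My plan is to split the theorem into two stages. Stage~1 extends the block classification of Theorem \ref{theo:classNRC} to finite fields of odd characteristic, yielding a conjugacy $\calS \simeq \Mats_{n_1}(\F) \vee \cdots \vee \Mats_{n_p}(\F)$ for some ordered partition $n = n_1 + \cdots + n_p$. Stage~2 exploits the non-Pythagorean nature of such an $\F$ to force every $n_i$ to equal $1$, at which point the joint collapses to $\Matt_n(\F)$.

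Stage~2 is the short part. In a finite field $\F$ of odd characteristic the squares form a subgroup of index $2$ in $\F^\times$, while a classical pigeonhole argument on the sets $\{x^2 : x \in \F\}$ and $\{\alpha - y^2 : y \in \F\}$ shows that every element of $\F$ is a sum of two squares; hence $\F$ is not Pythagorean, and we may pick $x,y \in \F$ with $x^2 + y^2$ a non-square. The symmetric matrix $\begin{bmatrix} x & y \\ y & -x \end{bmatrix}$ then has characteristic polynomial $X^2 - (x^2 + y^2)$, which is irreducible over $\F$, so this element of $\Mats_2(\F)$ is not triangularizable. Padding with zeros into a symmetric $k$-by-$k$ matrix extends the obstruction to $\Mats_k(\F)$ for all $k \geq 2$, forcing every block $\Mats_{n_i}(\F)$ in the decomposition of Stage~1 to satisfy $n_i = 1$, so that $\calS \simeq \Matt_n(\F)$.

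Stage~1 is the real difficulty. The introduction points out that the proof of Theorem \ref{theo:classNRC} in \cite{dSPtriang} actually extends to any field satisfying $|\F| \geq n$, so I only need to treat the small-field regime $n > |\F|$, and I would proceed by induction on $n$ with base cases $n \leq |\F|$ handled by the existing result. For the inductive step, it suffices to produce a proper $\calS$-invariant subspace $V \subsetneq \F^n$: given such a $V$, the restriction of $\calS$ to $V$ and the induced action on $\F^n/V$ are both weakly triangularizable, an extremality argument on the block upper-triangular decomposition of $\calS$ in a basis adapted to $V$ forces both to be optimal weakly triangularizable subspaces in their own ambient algebras, and the inductive hypothesis delivers joint decompositions that concatenate into one for $\calS$.

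The main obstacle is producing the invariant subspace $V$ over a small field, where the generic-eigenvalue argument of the large-field case breaks down. My first attempt would be a counting argument: the weak-triangularizability of $\calS$ forces every $A \in \calS$ to admit at least one $\F$-rational eigenline, producing at least $|\calS|$ pairs $(A,\ell) \in \calS \times \mathbb{P}^{n-1}(\F)$ with $A\ell \subseteq \ell$, and pigeonholing by $\ell$ yields a line preserved by a large subfamily of $\calS$; upgrading this subfamily to all of $\calS$ would rely on the fact that the set of $A \in \calS$ stabilizing a fixed line is itself a linear subspace, together with the extremality $\dim \calS = n(n+1)/2$ to conclude that this stabilizer fills $\calS$. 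I expect that the refinement requires showing a typical element of $\calS$ contributes many eigenlines, possibly in tandem with a Gerstenhaber-style analysis of the subset of nilpotent matrices in $\calS$, whose structure is tightly constrained by the dimension hypothesis.
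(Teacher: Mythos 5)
Your Stage 2 is fine in substance (finite fields of odd characteristic are not Pythagorean, so $\Mats_k(\F)$ is not weakly triangularizable for $k\geq 2$, forcing all blocks to be $1\times 1$); this is exactly the observation the paper makes in its introduction just below Theorem \ref{theo:classNRC}. But Stage 1 — the extension of the block classification $\calS \simeq \Mats_{n_1}(\F)\vee\cdots\vee\Mats_{n_p}(\F)$ to small finite fields — is the entire content of the theorem, and you do not prove it. You reduce it to producing a proper nonzero $\calS$-invariant subspace, which is a reasonable target (and your dimension count showing that the restriction and quotient would then both be optimal is correct). The gap is in producing that subspace.

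The pigeonhole as you set it up cannot close. Each $A\in\calS$ has at least one rational eigenline, so $\sum_{\ell}|\calS_\ell|\geq|\calS|$ where $\calS_\ell=\{A\in\calS:A\ell\subseteq\ell\}$, and hence some $\calS_\ell$ has $|\calS_\ell|\geq|\calS|/|\mathbb{P}^{n-1}(\F)|$. Since $\calS_\ell$ is a subspace and $|\mathbb{P}^{n-1}(\F)|\approx|\F|^{n-1}$, this only gives $\dim\calS_\ell\gtrsim\binom{n+1}{2}-(n-1)=\binom{n}{2}+1$, which is roughly $n-1$ dimensions short of $\dim\calS$. "Extremality" does not upgrade this: there is no a priori reason the stabilizer of a line that many elements preserve should be all of $\calS$, and a triangularizable matrix can have exactly one eigenline (a single Jordan block), so you cannot hope to improve the pairs count by arguing a typical element contributes many eigenlines. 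The appeal to a Gerstenhaber-style analysis of the nilpotent part is a placeholder, not an argument. This is a genuine missing step, and it is precisely where the difficulty of the theorem lives.

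For comparison, the paper does not produce an invariant subspace first and does not go through the block classification at all. It starts from the $\calS$-adapted vector $x$ of Proposition \ref{prop:adapted}, sets $e_n:=x$, inducts on the quotient $V/\F x$ to obtain candidate vectors $e_1,\dots,e_{n-1}$, and then verifies that the resulting flag works by a long matrix computation driven by Lemma \ref{lemma:pol} (a statement about pencils $p-\lambda q$ of split polynomials over a finite field) together with several perturbation tricks. The invariant line $\F e_1$ emerges at the very end as a consequence, not as the engine of the proof. If you want to pursue an invariant-subspace-first strategy over small fields, you would need a substantially new idea for producing it, and the counting heuristic as stated does not supply one.
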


Finite fields with characteristic $2$ behave completely differently, nevertheless we hope to settle their case in the future.

\subsection{Strategy}

In \cite{dSPtriang}, Theorems \ref{theo:dimNRC} and \ref{theo:classNRC} were proved thanks to a double-duality method, connecting the problem
to the one of spaces of bounded rank matrices. The method used in a critical way a powerful theorem of Atkinson \cite{AtkinsonPrim},
which describes the so-called (semi-) primitive
spaces of bounded rank matrices. However, this method fails for fields with low cardinality, due to the failure of Atkinson's theorem
for such fields (see, e.g., \cite{dSPprimitiveF2} for various counterexamples over $\F_2$).

We must turn here to more elementary methods, and here we will take advantage of the major key in the proof of Theorem \ref{theo:dimNRC},
and combine it with the now classical method that was used in various works of the same flavor
\cite{dSPsoleeigenvalue,dSPlargeaffinenonsingular,dSPGerstenhaberskew,dSPfeweigenvalues}.
Let us immediately recall this result, in terms of spaces of endomorphisms.

\begin{Def}
Let $\calS$ be a linear subspace of $\End(V)$, where $V$ is a finite-dimensional vector space. A vector $x \in V \setminus \{0\}$
is called \textbf{$\calS$-adapted} when $\calS$ contains no element with range $\F x$ and trace $0$.

A linear hyperplane $H$ of $V$ is called \textbf{$\calS$-adapted} when $\calS$ contains no element with kernel $H$ and trace $0$.
\end{Def}

\begin{prop}[Proposition 2.1 in \cite{dSPtriang}]\label{prop:adapted}
Assume that $x \in \F \mapsto x^2 \in \F$ is nonsurjective. Let $V$ be a vector space over $\F$ with positive finite dimension.
Then every weakly triangularizable linear subspace $\calS$ of $\End(V)$ has an adapted vector.
\end{prop}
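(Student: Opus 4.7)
The plan is to argue by contradiction: assume $\calS$ has no adapted vector, so for every nonzero $x \in V$ there exists a rank-$1$ nilpotent $u_x = x\varphi_x \in \calS$ with $\varphi_x(x) = 0$ and $\varphi_x \neq 0$. The non-quadratic-closure of $\F$ will enter through a pencil argument on pairs of such rank-$1$ nilpotents.

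First I would establish a key orthogonality lemma: for any two rank-$1$ nilpotents $u = x\varphi$ and $v = y\psi$ of $\calS$, one has $\varphi(y)\psi(x) = 0$. To see this, consider $u + tv \in \calS$ for $t \in \F$. Since $u^2 = v^2 = 0$, the operator $u + tv$ has rank at most $2$ and trace $0$, and a direct computation (using $\tr(uv) = \varphi(y)\psi(x)$) yields $\tr((u+tv)^2) = 2t\varphi(y)\psi(x)$. Its two nonzero eigenvalues are therefore $\pm\mu$ with $\mu^2 = t\varphi(y)\psi(x)$; triangularizability forces $\mu \in \F$, so $t\varphi(y)\psi(x)$ must be a square in $\F$ for every $t \in \F$. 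If $\varphi(y)\psi(x)$ were nonzero, the map $t \mapsto t\varphi(y)\psi(x)$ would be a bijection of $\F$ sending every element into the set of squares, contradicting the non-surjectivity of squaring.

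Next I would extract combinatorial data from non-adaptedness. Fix a basis $(e_1,\ldots,e_n)$ of $V$, set $u_i := u_{e_i} = e_i \varphi_i$, and let $M_{ij} := \varphi_i(e_j)$. The key lemma gives $M_{ii} = 0$ and $M_{ij}M_{ji} = 0$ for $i \neq j$, while non-adaptedness forces every row of $M$ to be nonzero. Viewing $M$ as a directed graph $G$ on $\{1,\ldots,n\}$ with edge $i \to j$ whenever $M_{ij} \neq 0$, we see that $G$ has no loops, no $2$-cycles, and no sinks; hence $G$ contains a directed cycle. Let $C : i_1 \to \cdots \to i_k \to i_1$ be a shortest directed cycle of $G$; by minimality $k \geq 3$ and $C$ is chord-free (for any chord would produce a strictly shorter cycle). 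Non-adaptedness at $v := \sum_{l=1}^k e_{i_l}$ now yields a rank-$1$ nilpotent $u_v = v\varphi_v \in \calS$ with $\varphi_v(v) = 0$ and $\varphi_v \neq 0$, and applying the key lemma to each pair $(u_v, u_{i_l})$ and using chord-freeness, $\varphi_{i_l}(v) = M_{i_l, i_{l+1}}$ is nonzero, forcing $\varphi_v(e_{i_l}) = 0$ for every $l \in \{1,\ldots,k\}$.

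The hard part is to show that $\varphi_v$ vanishes on the remaining basis vectors $e_j$ with $j \notin \{i_1,\ldots,i_k\}$ as well, so that $\varphi_v = 0$ and we reach a contradiction with non-adaptedness at $v$. The key lemma applied to $(u_v, u_j)$ gives $\varphi_v(e_j)\varphi_j(v) = 0$, and $\varphi_j(v) = \sum_l M_{j,i_l}$ may vanish either because $j$ has no out-edge into $\{i_1,\ldots,i_k\}$ or because the contributions cancel. Two strategies seem natural: either replace $v$ by a generic combination $\sum_l \alpha_l e_{i_l}$ to break the cancellations, which works cleanly when $|\F|$ is large enough; or proceed by induction on $\dim V$ by passing to the quotient $V/\F x_0$ for a rank-$1$ nilpotent $u_0 = x_0\varphi_0 \in \calS$, applying the statement to the induced weakly triangularizable subspace of $\End(V/\F x_0)$, and lifting an adapted vector there to some $y \in V \setminus \ker \varphi_0$ (the key lemma then ensures that any putative rank-$1$ nilpotent $s = y\psi \in \calS$ satisfies $\psi(x_0) = 0$, so $s$ descends to a rank-$1$ nilpotent of the quotient with image the chosen adapted line, contradicting its adaptedness). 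The delicate point in this induction is to guarantee that an adapted vector of the quotient can be lifted outside $\ker\varphi_0 / \F x_0$, which requires varying the initial choice of $u_0$.
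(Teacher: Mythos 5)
This proposition is quoted verbatim from \cite{dSPtriang}; the present paper gives no proof of it, so there is nothing of the author's to compare against, and I can only assess the internal correctness of your attempt.

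Your orthogonality lemma is right and is the correct engine: if $u=x\varphi$ and $v=y\psi$ are rank-one nilpotents of $\calS$, then $u+tv$ has trace $0$, rank $\leq 2$, and $\tr\bigl((u+tv)^2\bigr)=2t\,\varphi(y)\psi(x)$, so its characteristic polynomial is $X^{n-2}\bigl(X^2-t\,\varphi(y)\psi(x)\bigr)$; splitness for every $t$ forces $\varphi(y)\psi(x)=0$ when squaring is not surjective. The gap is in the combinatorial phase, and it is a real one, not a technicality. Your cycle argument yields only that $\varphi_v$ vanishes on $e_{i_1},\dots,e_{i_k}$ and on those $e_j$ for which $\varphi_j|_W\neq 0$ where $W=\Vect(e_{i_1},\dots,e_{i_k})$; if some $j$ outside the cycle has $\varphi_j|_W=0$, you obtain \emph{no} constraint on $\varphi_v(e_j)$. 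Taking a generic $v=\sum_l \alpha_l e_{i_l}$ fixes the cancellation issue but does nothing for this structural one, so the claim that strategy (a) ``works cleanly when $|\F|$ is large enough'' is not correct. The inductive variant (b) has a parallel unfilled hole: you need an $\overline{\calS_0}$-adapted vector of $V/\F x_0$ to admit a lift outside $\ker\varphi_0$, and ``varying $u_0$'' is stated as a hope rather than shown to work. (Be warned that the natural-looking repair of restricting $\sum_l\lambda_l u_{i_l}$ to $W$ and exploiting the characteristic polynomial $X^k-s$ of the resulting cyclic matrix also fails in general: if $k$ is a power of $\car\F$, then $X^k-s=(X-s^{1/k})^k$ splits for every $s$.)

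For a finite field there is a clean way to finish from your lemma that bypasses the cycle picture entirely. Assuming no adapted vector exists, attach to each line $\ell=\F x$ of $V$ a hyperplane $H_\ell:=\ker\varphi_x\supseteq\ell$; your lemma, applied to the chosen rank-one nilpotents, says that for every pair of distinct lines $\ell,\ell'$ one has $\ell'\subseteq H_\ell$ or $\ell\subseteq H_{\ell'}$. With $N=\tfrac{q^n-1}{q-1}$ lines in $\mathbb{P}(V)$ and $M=\tfrac{q^{n-1}-1}{q-1}$ lines in each hyperplane, each $H_\ell$ ``covers'' at most $M-1$ unordered pairs $\{\ell,\ell'\}$, so at most $N(M-1)$ pairs are covered altogether; but $N-1=qM>2(M-1)$, hence $N(M-1)<\binom{N}{2}$ and some pair is uncovered, a contradiction. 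I would recommend replacing the cycle argument by this count (or by whatever mechanism \cite{dSPtriang} actually uses, if you want the result for infinite fields as stated).
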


This result will allow us to find an inductive proof of Theorem \ref{theo:main}.
Before we sketch the proof, we restate Theorem \ref{theo:main} in geometrical terms.

\begin{Not}
Let $\calF=(V_0,\dots,V_n)$ be a complete flag of a vector space $V$, i.e., a maximal increasing list of linear subspaces of $V$.
An endomorphism $u$ of $V$ is said to leave $\calF$ invariant when it leaves $V_i$ invariant for all $i \in \lcro 0,n\rcro$.
We denote by $\calT_\calF$ the space of all endomorphisms of $V$ that leave $\calF$ invariant.
\end{Not}

A basis $(e_1,\dots,e_n)$ is said to be adapted to $\calF$ whenever $V_i=\Vect(e_1,\dots,e_i)$ for all $i \in \lcro 0,n\rcro$.
If we take an adapted basis $\bfB$ for the flag $\calF$,
the space $\calT_\calF$ consists of all the endomorphisms of $V$ whose matrix in $\bfB$ is upper-triangular.
In particular $\dim \calT_\calF=\frac{n(n+1)}{2}\cdot$

\begin{theo}
Let $\F$ be a finite field with odd characteristic.
Let $V$ be an $n$-dimensional vector space, and $\calS$ be a weakly triangularizable linear subspace of $\End(V)$ with dimension $\frac{n(n+1)}{2}\cdot$
Then $\calS=\calT_\calF$ for some complete flag $\calF$ of $V$.
\end{theo}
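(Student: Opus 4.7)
The plan is to proceed by induction on $n$, the case $n=1$ being trivial. For $n \geq 2$, I would apply Proposition \ref{prop:adapted} to obtain an $\calS$-adapted vector $x$, and then carry out a dimensional analysis: setting $\calN_x := \{u \in \calS : \im u \subseteq \F x\}$ and $\calW := \{u \in \calS : u(x) \in \F x\}$, the adapted property makes the trace map injective on $\calN_x$ (a nonzero element has range exactly $\F x$), so $\dim \calN_x \leq 1$. The induced subspace $\bar\calW \subseteq \End(V/\F x)$ is weakly triangularizable, hence bounded by $\binom{n}{2}$ via Theorem \ref{theo:dimNRC}, while $\dim(\calS/\calW) \leq n-1$ via $u \mapsto [u(x)]$. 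The hypothesis $\dim \calS = \binom{n+1}{2}$ then forces all three bounds to saturate: in particular $\dim \calN_x = 1$, and the evaluation $\calS \to V/\F x$ is surjective, so $\calS x + \F x = V$; this rules out $\F x$ being $\calS$-invariant for $n \geq 2$. Fixing a nonzero $u_0 \in \calN_x$, I would write $u_0 = x \otimes \varphi$ with $\alpha := \varphi(x) \neq 0$, and set $H := \ker \varphi$, giving $V = \F x \oplus H$.

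The heart of the argument is then to prove that $H$ is $\calS$-invariant. For an arbitrary $u \in \calS$ with block decomposition $u = \begin{pmatrix} a & b^\top \\ c & D \end{pmatrix}$ relative to $V = \F x \oplus H$, the characteristic polynomial $\chi_{u+\lambda u_0}(t) = (t - a - \lambda\alpha)\chi_D(t) - b^\top \operatorname{adj}(tI - D) c$ must split in $\F[t]$ for every $\lambda \in \F$, since $u + \lambda u_0$ lies in $\calS$. A combinatorial analysis of this one-parameter family of polynomials over a finite field of odd characteristic---anchored by the base fact that if $\nu^2 + \beta$ takes only square values as $\nu$ ranges over $\F$ then $\beta = 0$---should force $b = 0$ or $c = 0$ for each individual $u$. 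Since a linear subspace contained in a union of two linear subspaces is contained in one of them, the conclusion globalises: either $b \equiv 0$ on $\calS$ or $c \equiv 0$ on $\calS$. The second alternative would make $\F x$ invariant, contradicting $\calS x + \F x = V$ from above, so $b \equiv 0$ on $\calS$; that is, $H$ is $\calS$-invariant.

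The proof would then be completed by induction. The restriction map $\calS \to \End(H)$, $u \mapsto u|_H$, has kernel of dimension at most $n$ (any element of it is determined by $u(x) \in V$) and image that is weakly triangularizable, hence of dimension at most $\binom{n}{2}$ by Theorem \ref{theo:dimNRC}. Because $\binom{n+1}{2} = n + \binom{n}{2}$, both bounds saturate, and so $\calS|_H$ is an optimal weakly triangularizable subspace of $\End(H)$; by the induction hypothesis, $\calS|_H = \calT_{\bar\calF}$ for some complete flag $\bar\calF$ of $H$. The flag $\calF$ obtained by adjoining $V$ as the top element of $\bar\calF$ is a complete flag of $V$, and $\calS \subseteq \calT_\calF$ (each $u \in \calS$ preserves $H$ and $u|_H$ preserves $\bar\calF$); equality $\calS = \calT_\calF$ then follows by the dimension match. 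The main obstacle lies in the combinatorial step of the second paragraph, which is the only point at which the hypothesis that $\F$ is finite of odd characteristic is used in an essential way, and where the proof truly departs from the treatment of infinite fields in \cite{dSPtriang}.
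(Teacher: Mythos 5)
Your setup is sound and matches the paper's opening moves: applying Proposition \ref{prop:adapted} to get an adapted vector $x$, the dimension count forcing $\dim(\calS\cap\Hom(V,\F x))=1$ and $\calS x+\F x=V$, and the resulting rank-one idempotent $u_0=x\otimes\varphi$ with $H:=\ker\varphi$. Your plan then diverges from the paper: you want to show directly that $H$ is $\calS$-invariant and then restrict to $H$, which would indeed give a shorter induction than the paper's route (which passes to the quotient $V/\F x$, lifts the resulting flag to $\ker\pi$, performs a second induction on the sub-space of $\calS$ leaving the \emph{other} hyperplane $\Vect(e_2,\dots,e_n)$ invariant, checks diagonal compatibility, and then carries out a lengthy matrix analysis). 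So the comparison is: if your step worked, you would have a genuinely simpler proof.

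But the key step has a gap that I do not think can be filled as stated. You claim the one-parameter pencil $u+\lambda u_0$, being triangularizable for all $\lambda$, forces $b(u)=0$ or $c(u)=0$ for each $u\in\calS$. The information actually extractable from this pencil is much weaker. Writing $\chi_{u+\lambda u_0}(t)=(t-a-\lambda\alpha)\chi_D(t)-b^\top\operatorname{adj}(tI-D)c=p(t)-\lambda\alpha\chi_D(t)$ and applying Lemma \ref{lemma:pol}, one gets $\chi_D\mid p$, hence (comparing degrees) $b^\top\operatorname{adj}(tI-D)c=0$ as a polynomial in $t$. This identity does \emph{not} force $b=0$ or $c=0$: take $D=0$, and $b,c$ two orthogonal nonzero vectors of $\F^{n-1}$ (say $b=e_1$, $c=e_2$); then $\operatorname{adj}(tI)=t^{n-2}I$ gives $b^\top\operatorname{adj}(tI)c=0$ while $b\neq 0\neq c$, and in fact $\chi_{u+\lambda u_0}(t)=(t-a-\lambda\alpha)t^{n-1}$ splits for every $\lambda$, so the pencil gives no obstruction at all. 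The ``square'' fact you invoke (if $\nu^2+\beta$ is always a square then $\beta=0$) does not rescue this either, because the $2\times 2$ obstruction it encodes only appears for specific invariant $2$-planes, which need not exist in such configurations. To rule out such $u$ one needs to play $u$ against \emph{other} elements of $\calS$, not just $u_0$; that is precisely what the paper's matrix-by-matrix analysis with the $A_L$, $B_C$, $G_U$ families and the perturbation tricks does, and why the invariance of $\ker\pi$ only emerges there at the very end rather than in a single clean step. As written, your second paragraph is the genuine gap, and it is not merely a matter of filling in details: the specific claim you rely on is false.
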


It is known that for any \emph{optimal} weakly triangularizable linear subspace $\calS$ of $\End(V)$,
the $\calS$-invariant subspaces of $V$ are totally ordered by inclusion (see the appendix of \cite{dSPtriang}, in which this property is shown to hold for a large
class of similar problems). Hence if we have a complete flag of invariant subspaces for such a space $\calS$, then this flag contains \emph{all} the $\calS$-invariant subspaces,
and in particular it is uniquely determined by $\calS$.

To find a complete flag of invariant subspaces, which is equivalent to finding an adapted basis of this flag,
we will start from an $\calS$-adapted vector $x$, and will take $e_n:=x$ as obviously $x$ is a very good candidate for the last vector.
Then we will consider the subspace of all $u \in \calS$ that map $e_n$ into $\F e_n$, and consider the induced endomorphisms
of $V/\F e_n$. It will be seen that the resulting space is weakly triangularizable, and optimal, and we will find a corresponding flag
$\overline{\calF}$ of $V/\F e_n$. Moreover, it will be seen that $\calS$ contains a unique idempotent $\pi$ with range $\F e_n$.
Then, we will lift an adapted basis of $\overline{\calF}$ to a basis $(e_1,\dots,e_{n-1})$ of $\Ker \pi$,
thereby obtaining a basis $(e_1,\dots,e_n)$ of $V$.
Of course, at this point we will want to prove that the flag $\calF:=(\Vect(e_1,\dots,e_i))_{0 \leq i \leq n}$
is invariant under $\calS$, but there does not seem to be a quick proof of this, and a much deeper analysis will be required.

Using a classical approach, we will push the previous idea further by taking $H:=\Vect(e_2,\dots,e_n)$ and by considering the subspace
$\calS''$ consisting of the $u \in \calS$ that leave $H$ invariant. Then we will prove that $H$ is $\calS$-adapted and that
$\{u_H \mid u \in \calS''\}$ is an optimal weakly triangularizable linear subspace of $\End(H)$. By induction,
we will find a corresponding flag, and we will then prove -- this is called the diagonal compatibility step -- that this flag is exactly
$(\Vect(e_2,\dots,e_i))_{1 \leq i \leq n}$.

The remainder of the proof will consist in exploiting all the previous results in matrix form: we will obtain
very simple matrices that represent specific elements of $\calS$ in the basis $(e_1,\dots,e_n)$, and by linearly combining these
matrices we will use the triangularizability assumption to simplify their form, until we find that every upper-triangular matrix corresponds to an element of $\calS$
in the basis $(e_1,\dots,e_n)$.

The proof is organized as follows. In Section \ref{section:n=2}, we settle the case $n=2$, which cannot be done by induction.
In Section \ref{section:lemmapol}, we state and prove a lemma on polynomials over finite fields, which is used in late steps of our proof of Theorem \ref{theo:main}.
The inductive part of Theorem \ref{theo:main} is performed in Section \ref{section:induction}.

\vskip 3mm
\begin{center}
\emph{From now on, we systematically assume that $\F$ is finite with odd characteristic.}
\end{center}

\section{The case $n=2$}\label{section:n=2}

Let $V$ be a $2$-dimensional vector space, and $\calS$ be an optimal weakly triangularizable subspace of $\End(V)$.
Since $\F \id_V+\calS$ is weakly triangularizable, the optimality of $\calS$ yields $\id_V \in \calS$.

Let us consider the orthogonal complement $\calS^\bot$ of $\calS$ for the non-degenerate symmetric bilinear form
$(u,v) \mapsto \tr(uv)$ on $\End(V)$. Since $\dim \calS=3$ we have $\calS^\bot=\F v_0$ for some $v_0 \in \End(V) \setminus \{0\}$,
and since $\id_V \in \calS$ we have $\tr (v_0)=0$.

In particular, because $\car(\F) \neq 2$ the endomorphism $v_0$ is not a scalar multiple of the identity, and hence we can find
a vector $\overrightarrow{\jmath} \in V$ such that $(\overrightarrow{\jmath},v_0(\overrightarrow{\jmath}))$ is a basis of $V$.
Then, in the basis $\bfB=(v_0(\overrightarrow{\jmath}),\overrightarrow{\jmath})$ the matrix of $v_0$ equals
$\begin{bmatrix}
0 & 1 \\
\beta & 0
\end{bmatrix}$ for some $\beta \in \F$, and we deduce that $\calS$ is represented in $\bfB$ by the matrix space
$$\left\{\begin{bmatrix}
x & z \\
-\beta z & y
\end{bmatrix} \mid (x,y,z) \in \F^3 \right\}.$$
It follows that for all $(x,y,z) \in \F^3$ the polynomial $t^2-(x+y)t+(xy+\beta z^2)$ splits over $\F$.
Taking $y=-x$ yields that $x^2-\beta z^2$ is a square in $\F$ for all $(x,z) \in \F^2$.
Yet, this is known to fail if $\beta\neq 0$: indeed in that case $(x,z) \in \F^2 \mapsto x^2-\beta z^2$
is a regular quadratic form of rank $2$, and since $\F$ is finite this form must take all possible values in $\F$
(this is also a special case of the Chevalley-Warning theorem).
Yet since $\car(\F) \neq 2$ it is known that not all elements of $\F$ are squares (the mapping $x \in \F^\times \mapsto x^2 \in \F^\times$
is non-injective because it is a group homomorphism and its kernel contains $-1_\F$).
Therefore $\beta=0$ and we conclude that
$\calS$ is represented by $\MatT_2(\F)$ in the basis $(v_0(\overrightarrow{\jmath}),\overrightarrow{\jmath})$. This completes our proof.

\begin{Rem}
A close look at the proof of Theorem \ref{theo:classNRC} featured in \cite{dSPtriang} reveals that it
shows that every weakly triangularizable subspace of $\Mat_2(\F)$ with dimension $3$
is conjugated either to $\Matt_2(\F)$ or to $\Mats_2(\F)$, provided that $\F$ has characteristic other than $2$ and is not quadratically closed.
In our setting, where $\F$ is finite with odd characteristic, the second option is then ruled out by the fact that $\F$ is not Pythagorean.
However, this case is so elementary that we preferred to give a direct proof.
\end{Rem}

\section{A lemma on split polynomials over finite fields}\label{section:lemmapol}

This short section is devoted to the proof of the following lemma.

\begin{lemma}\label{lemma:pol}
Let $\F$ be a finite field with $|\F|>2$.
Let $p,q$ be monic polynomials of respective degrees $d$ and $d-1$ over $\F$.
Assume that $p-\lambda q$ splits over $\F$ for all $\lambda \in \F$. Then $q$ divides $p$.
\end{lemma}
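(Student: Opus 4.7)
My plan is to reduce to the case $\gcd(p, q) = 1$ and then derive a rigidity statement on the family $(p - \lambda q)_{\lambda \in \F}$ that forces a contradiction for $d \geq 2$. For the reduction, write $g := \gcd(p, q)$ and $p = g \tilde p$, $q = g \tilde q$ with $\gcd(\tilde p, \tilde q) = 1$; the factorisation $p - \lambda q = g (\tilde p - \lambda \tilde q)$ transfers the splitting hypothesis to $\tilde p - \lambda \tilde q$, so if the lemma is established in the coprime case, then $\tilde q \mid \tilde p$, and coprimality forces $\tilde q = 1$, whence $q = g \mid p$.

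Assume henceforth $\gcd(p, q) = 1$ and $d \geq 2$, aiming for a contradiction. Coprimality forbids common $\F$-roots of $p$ and $q$; consequently $p - \lambda q$ and $p - \mu q$ share no $\F$-root whenever $\lambda \neq \mu$, since subtracting them would force any such root to be a common root of $p$ and $q$. The $\F$-root sets of the family $(p - \lambda q)_{\lambda \in \F}$ are therefore pairwise disjoint nonempty (by the splitting assumption and $d \geq 1$) subsets of $\F$, and since there are $|\F|$ of them, each must be a singleton. Being monic of degree $d$ with a unique root $c_\lambda \in \F$, each polynomial then takes the rigid form $p - \lambda q = (x - c_\lambda)^d$ for every $\lambda \in \F$.

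Comparing the $x^{d-1}$-coefficient yields $a_{d-1} - \lambda = -d \, c_\lambda$, where $a_{d-1}$ is the $x^{d-1}$-coefficient of $p$. If $\car(\F) \mid d$, this would give $a_{d-1} = \lambda$ for every $\lambda \in \F$, absurd for $|\F| \geq 2$; hence $d \neq 0$ in $\F$ and $c_\lambda = (\lambda - a_{d-1})/d$. Differentiating $p - \lambda q = (x - c_\lambda)^d$ and multiplying by $x - c_\lambda$ yields the key identity
\[
(x - c_\lambda)(p' - \lambda q') = d \, (p - \lambda q),
\]
holding in $\F[x]$ for each $\lambda \in \F$. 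Once the explicit value of $c_\lambda$ is substituted, the difference of the two sides becomes a polynomial in $\lambda$ of degree at most $2$ with coefficients in $\F[x]$ that vanishes at each of the $|\F| \geq 3$ elements of $\F$, and therefore vanishes identically.

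Extracting the three resulting equations is then routine: the $\lambda^2$-coefficient forces $q' = 0$, and the $\lambda^1$-coefficient combined with $q' = 0$ gives $q = p'/d^2$. Setting $B := x + a_{d-1}/d$, the rigid form at $\lambda = 0$ gives $p = (x - c_0)^d = B^d$, so $p' = d \, B^{d-1}$ and thus $q = B^{d-1}/d$. Hence $q$ is a scalar multiple of $B^{d-1}$, which divides $p = B \cdot B^{d-1}$; so $q \mid p$. Combined with $\gcd(p, q) = 1$ and $\deg q = d - 1 \geq 1$, this is the desired contradiction. The main obstacle is the rigidity step that promotes the pointwise splitting hypothesis to the strong form $p - \lambda q = (x - c_\lambda)^d$; once this is in hand, the differentiation trick produces a polynomial identity in $\lambda$ of low degree (crucially of degree $\leq 2$, so that $|\F| \geq 3$ suffices to conclude), from which the structure of $p$ and $q$ falls out cleanly.
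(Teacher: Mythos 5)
Your proof is correct, and it shares the same opening with the paper's: reduce to the coprime case by dividing out $\gcd(p,q)$, observe that the $\F$-root sets of the $|\F|$ polynomials $p-\lambda q$ are pairwise disjoint nonempty subsets of $\F$ and hence singletons, conclude $p-\lambda q=(x-c_\lambda)^d$, and read off from the $x^{d-1}$-coefficient that $c_\lambda$ is affine in $\lambda$ and that $d\cdot 1_\F$ is invertible. The endgame is where you genuinely diverge. The paper extracts the coefficients of $x^1$ and $x^0$ as well, obtaining degree-one polynomial maps $\lambda\mapsto z_\lambda^{d-1}$ and $\lambda\mapsto z_\lambda^{d}$ on $\F$, and then derives a contradiction from the functional identity $z^{d}=z^{d-1}\cdot z$ by comparing degrees (here $|\F|>2$ is used to promote agreement of functions $\F\to\F$ of degree at most $2$ to equality of polynomials). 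You instead apply a logarithmic-derivative trick: differentiate the rigid form, multiply back by $x-c_\lambda$ to get $(x-c_\lambda)(p'-\lambda q')=d(p-\lambda q)$, substitute the explicit $c_\lambda$, and view the result as a polynomial of degree at most $2$ in $\lambda$ over $\F[x]$; $|\F|>2$ again forces it to vanish coefficientwise, and the $\lambda^2$ and $\lambda^1$ coefficients give $q'=0$ and $q=p'/d^2=B^{d-1}/d$ with $p=B^{d}$, hence $q\mid p$, contradicting coprimality once $d\geq 2$. Both arguments invoke $|\F|>2$ at structurally the same point (a polynomial of degree at most $2$ vanishing on at least $3$ points is zero), but your route produces the divisibility $q\mid p$ directly rather than a numerical degree mismatch, at the modest cost of invoking formal derivatives instead of further Vieta coefficients.
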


If $|\F|=2$, the result fails whenever $d$ is odd, with the counterexample $p:=t^d$ and $q:=t^d-(t-1)^d$.

\begin{proof}
Let us consider the (monic) greatest common divisor $r$ of $p$ and $q$. Write $p =rp_1$ and $q=r q_1$ for polynomials $p_1$ and $q_1$ with respective degrees
$e$ and $e-1$ for some integer $e \geq 1$. Then for all $\lambda \in \F$, since $p-\lambda q=r(p_1-\lambda q_1)$,
we get that $p_1-\lambda q_1$ splits over $\F$. This reduces the situation to the case where $p$ and $q$ are relatively prime, in which case
we need to prove that $d=1$ (so that $q=1$ and hence $q \mid p$).

So, assume that $p$ and $q$ are relatively prime and that $d \geq 2$. Set $h_\lambda:=p-\lambda q$ for $\lambda \in \F$.
For distinct $\lambda,\mu$ in $\F$, the polynomials
$h_\lambda$ and $h_\mu$ have no common root, since any common root of them would be a common root of $p$ and $q$.
Yet $h_\lambda$ has at least one root in $\F$ for all $\lambda \in \F$ (because $\deg(h_\lambda)=d \geq 1$).
Since $\F$ is finite it follows that each $h_\lambda$ is split with exactly one root in $\F$, which we denote by $z_\lambda$.
By Vieta's formula applied to $h_\lambda$, we deduce that each mapping
$$\lambda \mapsto d z_\lambda, \quad \lambda \mapsto d (z_\lambda)^{d-1} \quad \text{and} \quad \lambda \mapsto (z_\lambda)^d$$
is affine on $\F$. More specifically the first one equals $\lambda \mapsto \lambda+\tr p$ and is therefore nonconstant. It follows that $d.1_\F$
is invertible in $\F$, and hence $f : \lambda \mapsto (z_\lambda)^{d-1}$ and $g : \lambda \mapsto (z_\lambda)^d$ are affine mappings,
and $z : \lambda \mapsto z_\lambda$ is an affine mapping with degree $1$.
Since $z : \F \rightarrow  \F$ is surjective and $d \geq 2$, neither $f$ nor $g$ is constant (as both take the values $0$ and $1$) whence both have degree $1$.
With $g=f z$, and noting that $|\F|>2$, we obtain a contradiction by considering degrees of polynomials.
\end{proof}

\section{The inductive step}\label{section:induction}

Remember that $\F$ is a finite field of odd characteristic.
Throughout this section, we let $n \geq 3$ be an integer, and we assume that for every vector space $W$ with dimension $n-1$ over $\F$,
every optimal weakly triangularizable linear subspace of $\End(W)$ equals $\calT_\calF$ for some complete flag $\calF$ of linear subspaces of $W$.

We let $V$ be a vector space with dimension $n$ over $\F$, and we let $\calS \subseteq \End(V)$ be an optimal weakly triangularizable linear subspace.

\subsection{Finding an appropriate basis}\label{section:induction:start}

Our starting point is Proposition \ref{prop:adapted}. It yields an adapted vector $x$ for $\calS$.
We set
$$\calS':=\{u \in \calS : u(x)\in \F x\},$$
which can be viewed as the kernel of the linear mapping $u \in \calS \mapsto u(x) \in V/\F x$.
Hence $\dim \calS \leq (n-1)+\dim \calS'$.

For $u \in \calS'$ we denote by $\overline{u}$ the endomorphism of $V/\F x$
induced by $u$, and set
$$\calT:=\{\overline{u} \mid u \in \calS'\}\subseteq \End(V/\F x).$$

Then $\calT$ is a weakly triangularizable linear subspace of $\End(V/\F x)$, and hence $\dim \calT \leq t_{n-1}(\F)$.
The kernel of $u \in \calS' \mapsto \overline{u} \in \calT$
is $\calS \cap \Hom(V,\F x)$, and since $x$ is $\calS$-adapted the trace yields an injective linear form
$$u \in \calS \cap \Hom(V,\F x) \longmapsto \tr(u) \in \F.$$
As a consequence, $\dim(\calS \cap \Hom(V,\F x)) \leq 1$, with equality attained if and only if the previous linear form is bijective.
By the rank theorem, we deduce that
$$\dim \calS \leq (n-1)+1+\dim \calT \leq n+\dbinom{n}{2}=\dbinom{n+1}{2}=t_{n+1}(\F).$$
Since $\calS$ is optimal, all the previous inequalities turn out to be equalities, with the following consequences:
\begin{itemize}
\item[(i)] $\calT$ is an optimal weakly triangularizable linear subspace of $\End(V/\F x)$;
\item[(ii)] $\calS \cap \Hom(V,\F x)$ has dimension $1$;
\item[(iii)] $\calS \cap \Hom(V,\F x)$ contains a unique element with trace $1$, i.e., a unique rank $1$ idempotent $\pi$;
\item[(iv)] One has $\calS x+\F x=V$, and in fact $\calS x=V$ because $\pi(x)=x$.
\end{itemize}
Now, by induction there is a complete flag $\overline{\calF}$ of $V/\F x$ such that $\calT=\calT_{\overline{\calF}}$.
We can then take an adapted basis  $(f_1,\dots,f_{n-1})$ of $V/\F x$ for this flag, and consider the unique lifting $(e_1,\dots,e_{n-1})$ of it as a basis of $\Ker \pi$.
Then we take $e_n:=x$ and obtain a basis $(e_1,\dots,e_n)$ of $V$.

It will be proved that the flag $\calF:=(\Vect(e_1,\dots,e_k))_{0 \leq k \leq n}$ satisfies $\calT_{\calF} \subseteq \calS$,
but we are still very far from this conclusion.

\subsection{Second call to the induction hypothesis}\label{section:induction:secondstep}

Here, we consider the linear hyperplane $H:=\Vect(e_2,\dots,e_n)$.

\begin{claim}
The hyperplane $H$ is $\calS$-adapted.
\end{claim}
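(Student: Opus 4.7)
The plan is to suppose, for contradiction, that there exists some $v \in \calS$ with $\Ker v = H$ and $\tr v = 0$, and to show that this forces $v$ to have range $\F x$ and trace $0$, contradicting the fact that $x$ is $\calS$-adapted. So I would begin by noting that such a $v$ is automatically of rank $1$, and since $e_n = x$ belongs to $H = \Vect(e_2,\dots,e_n)$, I get $v(x)=0 \in \F x$. In particular $v \in \calS'$, so its induced map $\overline{v} \in \End(V/\F x)$ lies in $\calT = \calT_{\overline{\calF}}$.

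Next I would exploit the shape of $v$ in the basis $(e_1,\dots,e_n)$. Since $v$ kills $e_2,\dots,e_{n-1}$ as well as $e_n$, the induced endomorphism $\overline{v}$ kills $f_2,\dots,f_{n-1}$. But $\overline{v}$ is upper-triangular with respect to the adapted basis $(f_1,\dots,f_{n-1})$ of $\overline{\calF}$, so $\overline{v}(f_1) \in \Vect(f_1)$. Lifting this back to $V$, we obtain
\[
v(e_1) \in \Vect(e_1) + \F x.
\]
Writing $v(e_1) = \alpha e_1 + \beta x$, the matrix of $v$ in $(e_1,\dots,e_n)$ has all columns zero except the first, whose entry on the diagonal is $\alpha$. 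Hence $\tr v = \alpha$.

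The trace-zero hypothesis then forces $\alpha = 0$, so $v(e_1) = \beta x$, and since $v \neq 0$ we have $\beta \neq 0$. Thus $v \in \calS$ has range $\F x$ and trace $0$, directly contradicting the $\calS$-adaptedness of $x$. I do not anticipate any serious obstacle here: the main point is just to observe that the upper-triangularity of $\overline{v}$ with respect to the specific basis $(f_1,\dots,f_{n-1})$—which is what allowed us to construct $e_1$ in the first place—forces $v(e_1)$ to lie in $\Vect(e_1,x)$, at which point the trace condition immediately pushes $v(e_1)$ into $\F x$.
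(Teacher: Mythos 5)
Your proof is correct and follows essentially the same route as the paper: pass to $\calS'$ and the induced map $\overline{v}\in\calT=\calT_{\overline{\calF}}$, use the fact that $\overline{v}$ kills $f_2,\dots,f_{n-1}$ together with the trace condition to pin down the first column, and then invoke the $\calS$-adaptedness of $x=e_n$. The paper states this a bit more compactly (it deduces $\overline{u}=0$ directly from $\tr\overline{u}=0$ and upper-triangularity, then concludes $\im u\subseteq\F e_n$), but the underlying argument is identical.
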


\begin{proof}
Let $u \in \calS$ satisfy $u_{|H}=0$ and $\tr(u)=0$. In particular $u(e_n)=0$, whence $u \in \calS'$, and
the induced endomorphism $\overline{u}$ of $V/\F e_n$ vanishes at all the vectors $f_2,\dots,f_n$, and
$\tr(\overline{u})=\tr(u)=0$.
Since $\calT$ has associated flag $\overline{\calF}$, we derive that $\overline{u}=0$.
Hence $\im u \subseteq \F e_n$, and we conclude that $u=0$ because $e_n$ is $\calS$-adapted.
\end{proof}

Now, we use an argument that is the dual analogue of the one of the previous part of the proof.
We set
$$\calS'':=\{u \in \End(V) : u(H) \subseteq H\}.$$
Every element of $\calS''$ induces an endomorphism $u_H$ of $H$, and we recover
a weakly triangularizable subspace
$$\calW:=\{u_H \mid u \in \calS''\} \subseteq \End(H).$$
The kernel of $u \in \calS'' \mapsto u_H$ consists of the $u \in \calS$ that vanish on $H$,
and because $H$ is $\calS$-adapted the linear form
$$v \in \{u \in \calS : H \subseteq \Ker u\} \longmapsto \tr(v) \in \F$$
is injective on it, leading to $\dim \calS'' \leq 1+\dim \calW$.
Finally, we observe that $\calS''$ is the kernel of the mapping $u \in \calS \mapsto \pi^{(1)} \circ u_{|H} \in \Hom(H,V/H)$,
where $\pi^{(1)} : V \twoheadrightarrow V/H$ denotes the standard projection.
Hence the rank theorem yields
$$\dim \calS \leq (n-1)+\dim \calS'' \leq
(n-1)+1+\dim \calW \leq n+\dbinom{n}{2}=\dbinom{n+1}{2}.$$
Just like in Section \ref{section:induction:start}, we deduce from the equality $\dim \calS=\dbinom{n+1}{2}$ that:
\begin{itemize}
\item[(v)] $\calW$ is an optimal weakly triangularizable subspace of $\End(H)$;
\item[(vi)] $\calS$ contains a (unique) idempotent $\pi'$ with kernel $H$.
\end{itemize}
In particular $\pi' \in \calS'$.

\subsection{The matrix viewpoint}

Now we start representing the elements of $\calS$ by matrices.
We fix the basis $(e_1,\dots,e_n)$ once and for all, and we consider the matrix space $\calM$
that is associated with $\calS$ in this basis.

Throughout, we will need the notation for matrix units: $E_{i,j}$ will denote the matrix of $\Mat_n(\F)$
with exactly one nonzero entry, located at the $(i,j)$-spot and with value $1$, and we write it as $E_{i,j}^{(n)}$
in case of a possible confusion on the size of the matrices.

Now, we translate several of the previous properties in purely matrix terms.

\begin{itemize}
\item[(A1)] The space $\calM$ contains the unit matrix $E_{n,n}$ (which corresponds to $\pi$).
\item[(A2)] For all $U \in \MatT_{n-1}(\F)$, the space $\calM$ contains a unique matrix of the form
$$\begin{bmatrix}
U & [0]_{(n-1) \times 1} \\
[?]_{1 \times (n-1)} & 0
\end{bmatrix},$$
and every matrix in $\calM$ with last column zero is of this form.
This translates the information on the structure of $\calT$, combining it with (A1) and with the fact that $e_n$ is $\calS$-adapted.

\item[(A3)] For every column $C \in \F^n$, the space $\calM$ contains a matrix with last column $C$.
This reformulates point (iv) in Section \ref{section:induction:start}.
\end{itemize}

The matrix that corresponds to the idempotent $\pi'$ has all its columns zero starting from the second one,
so by point (A2) we obtain:

\begin{itemize}
\item[(A4)] The space $\calM$ contains $E_{1,1}+\alpha E_{n,1}$ for some $\alpha \in \F$.
\end{itemize}

\subsection{Diagonal compatibility}

Next, we take a closer look at the structure of $\calW$.
Applying the induction hypothesis to $\calW$, we obtain a complete flag $\calH=(H_0,\dots,H_{n-1})$ of $H$
such that $\calW=\calT_{\calH}$.
We wish to prove that $H_i=\Vect(e_2,\dots,e_{i+1})$ for all $i \in \lcro 0,n-1\rcro$,
and of course to this end it suffices to consider the integers $i \in  \lcro 1,n-2\rcro$.
We denote by $\calM'$ the matrix space that represents $\calW$ in the basis $(e_2,\dots,e_n)$.

To start with, we obtain:
\begin{itemize}
\item[(A5)] For all $U' \in \MatT_{n-2}(\F)$, the space $\calM'$ contains a matrix of the form
$$\begin{bmatrix}
U' & [0]_{(n-2) \times 1} \\
[?]_{1 \times (n-2)} & 0
\end{bmatrix}.$$
This follows directly from (A2).
\item[(A6)] The space $\calM'$ contains $E_{n-1,n-1}^{(n-1)}$. This directly follows from (A1).
\end{itemize}

Let $C' \in \F^{n-1}$. By (A3), some matrix in $\calM$ has last column
$\begin{bmatrix}
0 \\
C'
\end{bmatrix}$. Adding an appropriate matrix given by (A2), it follows that
$\calM$ contains a matrix of the form
$$\begin{bmatrix}
[0]_{1 \times (n-1)} & 0 \\
[?]_{(n-1) \times (n-1)} & C' \\
\end{bmatrix}.$$
Hence, we obtain:
\begin{itemize}
\item[(A7)] For all $X \in \F^{n-1}$, the space $\calM'$ contains a matrix with last column $X$. In other words
$\calW e_n=H$.
\end{itemize}

Now, we can prove the following result:

\begin{claim}
One has $H_i=\Vect(e_2,\dots,e_{i+1})$ for all $i \in \lcro 1,n-2\rcro$.
\end{claim}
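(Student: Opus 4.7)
The plan is to establish three successive facts, in order: (i) the vector $e_n$ belongs to no proper element of the flag $\calH$; (ii) the second-from-top subspace satisfies $H_{n-2} = K_{n-2}$, where I write $K_i := \Vect(e_2, \ldots, e_{i+1})$; (iii) the entire flag on $K_{n-2}$ coincides with $(K_0,\ldots,K_{n-2})$ by a clean dimension count.

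For (i), I would use (A7) directly: since $\calW e_n = H$, any $\calW$-invariant subspace of $H$ that contains $e_n$ must equal $H$, so $e_n \notin H_i$ for $i < n-1$. For (ii), the rank-one idempotent $E_{n-1,n-1}^{(n-1)}$ supplied by (A6) has range $\F e_n$ and kernel $K_{n-2}$; because $\calW = \calT_\calH$, it preserves each $H_i$, so $E_{n-1,n-1}^{(n-1)}(H_i) \subseteq H_i \cap \F e_n$. Step (i) forces this intersection to be zero for $i \leq n-2$, whence $H_i \subseteq K_{n-2}$ for all such $i$, and $H_{n-2} = K_{n-2}$ follows by dimension.

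For (iii), observe that $K_{n-2} = H_{n-2}$ is now $\calW$-invariant, so in the basis $(e_2,\ldots,e_n)$ every matrix of $\calM'$ takes the block-triangular form $\begin{bmatrix} \tilde M & c \\ 0 & a \end{bmatrix}$. Plugged back into (A5), this forces the ``$?$'' row to vanish, yielding $\begin{bmatrix} U' & 0 \\ 0 & 0 \end{bmatrix} \in \calM'$ for every $U' \in \MatT_{n-2}(\F)$. Setting $\tilde\calW := \{u_{|K_{n-2}} : u \in \calW\} \subseteq \End(K_{n-2})$, restriction to the top-left block gives $\MatT_{n-2}(\F) \subseteq \tilde\calW$ in the basis $(e_2,\ldots,e_{n-1})$. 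Since $\tilde\calW$ is weakly triangularizable on the $(n-2)$-dimensional space $K_{n-2}$, Theorem~\ref{theo:dimNRC} yields $\dim \tilde\calW \leq t_{n-2}(\F) = \binom{n-1}{2} = \dim \MatT_{n-2}(\F)$, forcing $\tilde\calW = \MatT_{n-2}(\F)$. Both $(K_0,\ldots,K_{n-2})$ and $(H_0,\ldots,H_{n-2})$ are then complete flags of $\tilde\calW$-invariant subspaces of $K_{n-2}$, and since such invariant subspaces are totally ordered by inclusion for an optimal weakly triangularizable subspace (as recalled in the introduction), the two flags must agree, giving $H_i = K_i$ for all $i \in \lcro 0, n-2 \rcro$.

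The main obstacle should be step (ii), where the cyclicity property (A7) and the specific rank-one idempotent from (A6) must be combined in just the right way to pin down $H_{n-2}$. Once this identification is secured, the dimension bound from Theorem~\ref{theo:dimNRC} delivers the remaining flag in one shot, and no further induction inside $K_{n-2}$ is needed.
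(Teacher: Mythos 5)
Your proposal is correct and follows essentially the same structure as the paper's proof. Both arguments pin down $H_{n-2}=\Ker\pi_H=\Vect(e_2,\dots,e_{n-1})$ by combining the idempotent $\pi_H$ from (A6) with the cyclicity statement (A7), then pass to the restriction $\tilde\calW$ (the paper's $\calZ$) on that hyperplane, identify it with $\MatT_{n-2}(\F)$ by a dimension count, and read off the remaining flag members from the invariant subspaces; your step (ii) phrases the identification of $H_{n-2}$ via $\pi_H(H_i)\subseteq H_i\cap\F e_n=\{0\}$ rather than via the eigenspace decomposition of $\pi_H$, and your step (iii) invokes the total-ordering property of invariant subspaces rather than the explicit description of $\MatT_{n-2}(\F)$-invariant subspaces, but these are cosmetic variations on the same argument.
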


\begin{proof}
Our starting point is the observation that $H_{n-2}$ is invariant under $\pi_H$, because obviously $\pi_H \in \calW$
(recall that $\pi_H$ denotes the endomorphism of $H$ induced by $\pi$).
Classically, since $\pi_H$ is diagonalisable its invariant subspaces are sums of subspaces of its eigenspaces, which here
are $\Ker \pi_H$ and $\im \pi_H$. Since $\im \pi_H$ has dimension $1$ and $H_{n-2}$ is a linear hyperplane of $H$, this leaves us with two possibilities only:
Either $H_{n-2}=\Ker \pi_H$ or $\im \pi_H \subseteq H_{n-2}$.

However $\im \pi_H=\Vect(e_n)$, and having $e_n \in H_{n-2}$ would lead to $\calW e_n \subseteq H_{n-2} \subset H$,
contradicting (A7). Hence
$$H_{n-2}=\Ker \pi_H=\Vect(e_2,\dots,e_{n-1}).$$
As a consequence, $\Vect(e_2,\dots,e_{n-1})$ is invariant under $\calW$.
In particular, we can refine (A5) as follows:

\begin{itemize}
\item[(A5')] For all $U' \in \MatT_{n-2}(\F)$, the space $\calM'$ contains the matrix
$$\begin{bmatrix}
U' & [0]_{(n-2) \times 1} \\
[0]_{1 \times (n-2)} & 0
\end{bmatrix}.$$
\end{itemize}
Now we consider the space
$$\calZ:=\{w_{H_{n-2}} \mid w \in \calW\} \subseteq \End(H_{n-2}).$$
Again, it is a weakly triangularizable linear subspace of $\End(H_{n-2})$,
and by (A5') we see that it includes $\calT_{\calF''}$, where
$\calF''=(\Vect(e_2,\dots,e_i))_{1 \leq i \leq n-1}$.
Then $\dim \calZ \leq \dbinom{n-1}{2}=\dim \calT_{\calF''}$
and we deduce that $\calZ=\calT_{\calF''}$. Yet all the subspaces $H_1,\dots,H_{n-2}$
are $\calZ$-invariant, whereas it is known that the $\calT_{\calF''}$-invariant subspaces are precisely the spaces
$\Vect(e_2,\dots,e_i)$ with $i \in \lcro 1,n-1\rcro$.
This proves the claimed statement.
\end{proof}

As a consequence, we now have the following results on the matrix space $\calM$:
\begin{itemize}
\item[(A8)] For all $U \in \Matt_{n-1}(\F)$, the space $\calM$ contains a unique matrix of the form:
$$\begin{bmatrix}
0 & [0]_{1 \times (n-1)} \\
[?]_{(n-1) \times 1} & U
\end{bmatrix}.$$
\item[(A9)] Every matrix $M$ of $\calM$ with first row zero has the following form:
$$M=\begin{bmatrix}
0 & [0]_{1 \times (n-2)} & 0 \\
[?]_{(n-2) \times 1} & [?]_{(n-2) \times (n-2)} & [?]_{(n-2) \times 1} \\
? & [0]_{1 \times (n-2)} & ?
\end{bmatrix}.$$
\end{itemize}

\subsection{Simple matrices in $\calM$}\label{section:simplematrices}

From now on, all the matrices of $\calM$ will be represented according to the following format:
$$\begin{bmatrix}
? & [?]_{1 \times (n-2)} & ? \\
[?]_{(n-2) \times 1} & [?]_{(n-2) \times (n-2)} & [?]_{(n-2) \times 1} \\
? & [?]_{1 \times (n-2)} & ?
\end{bmatrix},$$
so we will avoid systematically recalling the size of the blocks and simply write $0$ for a zero matrix, whatever its size.

Next, we apply (A2) to obtain that for all $L \in \Mat_{1,n-2}(\F)$ the space
$\calM$ contains a unique matrix of the form
$$A_L=\begin{bmatrix}
0 & L & 0 \\
0 & 0 & 0 \\
f(L) & \varphi(L) & 0
\end{bmatrix},$$
and $f : \Mat_{1,n-2}(\F) \rightarrow \F$ and $\varphi : \Mat_{1,n-2}(\F) \rightarrow \Mat_{1,n-2}(\F)$
are linear mappings.

Applying (A8) yields that, for all $C \in \F^{n-2}$, the space
$\calM$ contains a unique matrix of the form
$$B_C=\begin{bmatrix}
0 & 0 & 0 \\
\psi(C) & 0 & C \\
g(C) & 0 & 0
\end{bmatrix},$$
and $g : \F^{n-2} \rightarrow \F$ and $\psi : \F^{n-2} \rightarrow \F^{n-2}$
are linear mappings.

Let finally $U \in \MatT_{n-2}(\F)$. We know from point (A2) that $\calM$ contains a matrix of the form
$$\begin{bmatrix}
0 & 0 & 0 \\
0 & U & 0 \\
? & ? & 0
\end{bmatrix}.$$
Combining this with point (A9) yields that such a matrix is actually of the form
$$\begin{bmatrix}
0 & 0 & 0 \\
0 & U & 0 \\
? & 0 & 0
\end{bmatrix}.$$
Finally, thanks to the uniqueness in point (A2) we conclude that, for all $U \in \MatT_{n-2}(\F)$, the space
$\calM$ contains a unique matrix of the form
$$G_U=\begin{bmatrix}
0 & 0 & 0 \\
0 & U & 0 \\
h(U) & 0 & 0
\end{bmatrix},$$
and $h : \MatT_{n-2}(\F) \rightarrow \F$ is a linear form.

Our aim is to prove that all the mappings $f,g,h,\varphi,\psi$ vanish, and from there the conclusion will be quite close.
We will actually start by proving that $f,g$ and $\varphi$ vanish.

\subsection{Analyzing $f,g,\varphi$}\label{section:vanishfgphi}

As is customary in similar proofs \cite{dSPsoleeigenvalue,dSPlargeaffinenonsingular,dSPGerstenhaberskew,dSPfeweigenvalues},
we will combine the $A_L$ and $B_C$ matrices to retrieve as much information as we can from the assumption that
$\calM$ is weakly triangularizable.
So, let us fix $C \in \F^{n-2} \setminus \{0\}$ and $L \in \Mat_{1,n-2}(\F)$.

We take an arbitrary $\lambda \in \F$.
Let us consider the matrix $A_L+B_C+\lambda E_{n,n}$, and the endomorphism $u \in \calS$ that corresponds to it.
We consider the vector $y:=\underset{k=2}{\overset{n-1}{\sum}} c_k\,e_k$ and we note that $u(e_n)=y+\lambda e_n$.
Next,
$$u(y)=LC\,e_1+\varphi(L)C\, e_n,$$
$$u(e_1)=z+(f(L)+g(C))\,e_n \quad \text{where} \quad z:=\sum_{k=2}^{n-1} \psi(C)_k\, e_k,$$
and finally
$$u(z) =L\psi(C)\,e_1+\varphi(L) \psi(C)\,e_n.$$
Now, we split the discussion into three cases.

\vskip 2mm
\noindent
\textbf{Case 1: $y,z$ are linearly independent and $LC \neq 0$.}

Then $e_n,y,e_1,z$ are linearly independent, the subspace they span is invariant under $u$, and the resulting matrix of the induced endomorphism in the basis
$(e_n,y,e_1,z)$ equals
$$\begin{bmatrix}
\lambda & \varphi(L)C & f(L)+g(C) & \varphi(L)\psi(C) \\
1 & 0 & 0 & 0 \\
0 & LC & 0 & L\psi(C) \\
0 & 0 & 1 & 0 \\
\end{bmatrix}.$$
Hence, the characteristic polynomial of the latter matrix splits over $\F$.
By expanding with respect to the first column, we rewrite this characteristic polynomial as
$p-\lambda q$, where $p$ stands for the characteristic polynomial of $$\begin{bmatrix}
0 & \varphi(L)C & f(L)+g(C) & \varphi(L)\psi(C) \\
1 & 0 & 0 & 0 \\
0 & LC & 0 & L\psi(C) \\
0 & 0 & 1 & 0
\end{bmatrix},$$
and $q$ stands for the one of
$\begin{bmatrix}
0 & 0 & 0 \\
LC & 0 & Lg(C) \\
0 & 1 & 0
\end{bmatrix}$.
Hence $p$ and $q$ have respective degrees $4$ and $3$, and the assumptions of Lemma \ref{lemma:pol}
are satisfied (indeed, $u$ is triangularizable whatever the choice of $\lambda$). Therefore $q$ divides $p$.

Yet, by expanding with respect to the first column, we find $p=tq+r$, where
$$r:=\begin{vmatrix}
-\varphi(L)C & -f(L)-g(C) & -\varphi(L)\psi(C) \\
-LC & t & -L\psi(C) \\
0 & -1 & t
\end{vmatrix}.$$
It follows that $q$ divides $r$.
Yet clearly $\deg(r) \leq 2$, so we deduce that $r=0$.
In particular, the coefficient $-\varphi(L)C$ of $r$ on $t^2$ vanishes. Then
we expand with respect to the first column and find $LC(-(f(L)+g(C))t-\varphi(L) \psi(C))=0$,
and since $LC \neq 0$ we conclude that $f(L)+g(C)=0$.

\vskip 2mm
\noindent
\textbf{Case 2: $y,z$ are linearly dependent and $LC \neq 0$.}

We write $z=\beta y$ for some $\beta \in \F$.
Then $e_n,y,e_1$ are linearly independent, the subspace they span is invariant under $u$, and the resulting matrix of the induced endomorphism in the basis
$(e_n,y,e_1)$ equals
$$\begin{bmatrix}
\lambda & \varphi(L)C & f(L)+g(C) \\
1 & 0 & \beta \\
0 & LC & 0
\end{bmatrix}.$$
Then, with exactly the same line of reasoning as in Case 1, we use Lemma \ref{lemma:pol} to obtain
$$\begin{vmatrix}
-\varphi(L)C & -f(L)-g(C) \\
-LC & t
\end{vmatrix}=0,$$
and once more we use $LC \neq 0$ to deduce that $\varphi(L)C=0$ and $f(L)+g(C)=0$.

\vskip 2mm
\noindent
\textbf{Case 3: $LC=0$.}

Here $\Vect(e_n,y)$ is invariant under $u$ and the resulting endomorphism
has matrix $\begin{bmatrix}
\lambda & \varphi(L)C \\
1 & 0
\end{bmatrix}$ in the basis $(e_n,y)$. Applying Lemma \ref{lemma:pol} once more yields $\varphi(L)C=0$.

In particular, we have shown in any case that $\varphi(L)C=0$ for all $L \in \Mat_{1,n-2}(\F)$ and all $C \in \F^{n-2} \setminus \{0\}$.
Hence:

\begin{claim}
The mapping $\varphi$ vanishes.
\end{claim}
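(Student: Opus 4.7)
The statement to be proved is essentially a formal consequence of what has already been established in the case analysis. The displayed conclusion preceding the claim reads $\varphi(L)C=0$ for every $L\in\Mat_{1,n-2}(\F)$ and every \emph{nonzero} $C\in\F^{n-2}$, and I want to upgrade this to $\varphi=0$.

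My plan is simply to trivially extend this identity to all $C$ (the case $C=0$ is automatic since $\varphi(L)$ is linear in its argument, so $\varphi(L)\cdot 0=0$). Once $\varphi(L)C=0$ holds for every $C\in\F^{n-2}$, I specialize $C$ to each of the standard basis vectors of $\F^{n-2}$; since $\varphi(L)$ is a row vector in $\Mat_{1,n-2}(\F)$ and its product against $C$ is just the evaluation of the corresponding linear form, the vanishing on a spanning set of $\F^{n-2}$ forces $\varphi(L)=0$. Because $L$ was arbitrary, the linear map $\varphi$ is identically zero.

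There is no genuine obstacle here: the three-case analysis in Section \ref{section:vanishfgphi} has already done all the nontrivial work by extracting from Lemma \ref{lemma:pol} and the triangularizability of $A_L+B_C+\lambda E_{n,n}$ exactly the identity needed. The only thing left is the elementary observation about non-degeneracy of the product of a row vector with a column vector, which requires no further appeal to the structure of $\calM$ or to weak triangularizability.
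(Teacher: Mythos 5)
Your argument is exactly the one the paper has in mind: it closes the case analysis with ``$\varphi(L)C=0$ for all $L$ and all $C\neq 0$, hence\dots'' and the claim follows immediately, by precisely the non-degeneracy observation you spell out. Correct, and the same approach.
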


Here is the next step.

\begin{claim}
The mappings $f$ and $g$ vanish.
\end{claim}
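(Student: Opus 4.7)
The plan is to leverage the relation already obtained in Cases 1 and 2 of Section \ref{section:vanishfgphi}, namely
\[
f(L) + g(C) = 0 \quad \text{whenever} \quad LC \neq 0,
\]
valid for all $(L, C) \in \Mat_{1, n-2}(\F) \times \F^{n-2}$ in that regime. (Case 3 only contributed to the vanishing of $\varphi$, so it provides no new constraint on $f$ or $g$.) The task then reduces to decoupling this single bilinear-type constraint into the separate vanishing of $f$ and $g$.

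To achieve this, I would use a simple homogeneity argument. Fix any pair $(L_0, C_0)$ with $L_0 C_0 \neq 0$. For every $\mu \in \F \setminus \{0\}$, the pair $(\mu L_0, C_0)$ still satisfies $(\mu L_0) C_0 \neq 0$, so $\mu f(L_0) + g(C_0) = 0$. Subtracting the original identity $f(L_0) + g(C_0) = 0$ yields $(\mu - 1) f(L_0) = 0$. Since the odd-characteristic assumption forces $|\F| \geq 3$, I can select some $\mu \neq 0, 1$ in $\F$, whence $f(L_0) = 0$ and consequently $g(C_0) = 0$. This is where the hypothesis $|\F| > 2$ enters in an essential way.

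To extend this to arbitrary vectors, I would argue as follows. For each nonzero $L \in \Mat_{1, n-2}(\F)$, pick a standard basis column $C \in \F^{n-2}$ in a coordinate where $L$ has a nonzero entry; then $LC \neq 0$ and the argument above gives $f(L) = 0$. Symmetrically, for each nonzero $C \in \F^{n-2}$, a suitable standard basis row $L \in \Mat_{1, n-2}(\F)$ yields $g(C) = 0$. Together with $f(0) = g(0) = 0$ by linearity, this establishes that $f$ and $g$ vanish identically.

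I do not expect any genuine obstacle here: the analytic heavy lifting was performed in the preceding case analysis via Lemma \ref{lemma:pol}, and what remains is an elementary bookkeeping argument. The only subtlety is recognizing that the bilinear-like form of the constraint combined with linearity of $f$ and $g$ permits complete decoupling once a single nonzero scalar $\mu \neq 1$ is available in $\F$.
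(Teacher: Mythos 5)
Your proof is correct, and it reaches the conclusion by a slightly different route than the paper, though the overall strategy (start from the relation $f(L)+g(C)=0$ whenever $LC\neq 0$, then decouple) is the same. The paper fixes $C_0 \neq 0$ and considers the function $L \mapsto (f(L)+g(C_0))\,LC_0$, observes that it vanishes identically on $\Mat_{1,n-2}(\F)$, and then invokes a polynomial-vanishing argument: since the total degree is at most $2$ and $|\F|>2$, the product of two affine functions can vanish identically only if one factor does, and $L \mapsto LC_0$ is nonzero, so $L \mapsto f(L)+g(C_0)$ is the zero function; taking $L=0$ and then varying $C_0$ finishes. Your argument sidesteps the polynomial machinery entirely: fixing $(L_0,C_0)$ with $L_0 C_0 \neq 0$ and scaling $L_0$ by $\mu \in \F\setminus\{0\}$, linearity gives $\mu f(L_0)+g(C_0)=0$; subtracting the $\mu=1$ instance yields $(\mu-1)f(L_0)=0$, and choosing $\mu\notin\{0,1\}$ (possible since $|\F|\geq 3$) gives $f(L_0)=0$ and hence $g(C_0)=0$, after which the extension to all nonzero $L$ and $C$ by picking a compatible standard basis vector is routine. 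Both approaches make essential use of $|\F|>2$; yours is arguably the more elementary of the two, trading a factorization-of-polynomial-functions observation for a direct scalar manipulation exploiting the linearity of $f$.

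One small remark: the second paragraph of your argument, extending to arbitrary nonzero $L$ and $C$, is already implicit in the first. Once you have shown that $f(L_0)=0$ and $g(C_0)=0$ for \emph{every} pair with $L_0 C_0 \neq 0$, and since every nonzero $L$ (resp.\ $C$) participates in at least one such pair, the conclusion $f=0$ and $g=0$ follows at once; the explicit standard-basis construction you give is a clean way to make that final step concrete, but it is bookkeeping rather than a separate idea.
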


\begin{proof}
We have shown through Cases 1 and 2 that $f(L)+g(C)=0$ for all $(L,C)\in \Mat_{1,n-2}(\F) \times \F^{n-2}$ such that $LC \neq 0$.

Fix $C_0 \in \F^{n-2} \setminus \{0\}$. Then the mapping $L \mapsto (f(L)+g(C_0))LC_0$ vanishes on $\Mat_{1,n-2}(\F)$.
Yet, it is the product of two polynomial mappings of degree at most $1$, including $L \mapsto LC_0$, which is nonzero.
Since $|\F|>2$ we deduce that $L \mapsto f(L)+g(C_0)$ vanishes on $\Mat_{1,n-2}(\F)$,
which yields $f=0$ and $g(C_0)=0$. Varying $C_0$ then yields $g=0$.
\end{proof}

At this point, we cannot obtain the vanishing of $\psi$ in a straightforward way. Instead, we will turn to the
idempotent $\pi'$ and complete the analysis of its structure.

\subsection{The idempotent $\pi'$}

The idempotent $\pi'$ is represented by $E_{1,1}+\alpha E_{n,1}$ in the basis $(e_1,\dots,e_n)$ (see property (A4)).
We will use a perturbation trick to prove that $\alpha=0$.

\begin{claim}
One has $\alpha=0$.
\end{claim}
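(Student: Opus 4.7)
The plan is to exhibit a one-parameter family of matrices in $\calM$ whose characteristic polynomials must all split over $\F$ (by the weak triangularizability of $\calS$), and to force $\alpha=0$ via Lemma \ref{lemma:pol}. The reason this requires care is that the entry $\alpha$ sits in position $(n,1)$, a corner which decouples in the characteristic polynomials of most naive linear combinations of $\pi,\pi',A_L,B_C,G_U$; the trick is to assemble a cycle that threads through this entry.

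Fix $L\in\Mat_{1,n-2}(\F)$ and $C\in\F^{n-2}$ with $LC\neq 0$. Using the vanishings $f=g=\varphi=0$ established in Section~\ref{section:vanishfgphi}, the matrix
$$M_\lambda:=A_L+B_C+(E_{1,1}+\alpha E_{n,1})+\lambda E_{n,n}=\begin{bmatrix} 1 & L & 0 \\ \psi(C) & 0 & C \\ \alpha & 0 & \lambda \end{bmatrix}$$
lies in $\calM$ for every $\lambda\in\F$, so the endomorphism $u_\lambda$ it represents is triangularizable. Writing $y:=\sum_{k=2}^{n-1}C_{k-1}e_k$ and $z:=\sum_{k=2}^{n-1}\psi(C)_{k-1}e_k$, a direct computation gives $u_\lambda(e_n)=y+\lambda e_n$, $u_\lambda(y)=(LC)\,e_1$, $u_\lambda(e_1)=e_1+z+\alpha e_n$ and $u_\lambda(z)=(L\psi(C))\,e_1$, so that $W:=\Vect(e_n,y,e_1,z)$ is $u_\lambda$-invariant.

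I would then split the analysis into two cases. If $(y,z)$ is linearly independent, $W$ has dimension $4$ and cofactor expansion gives the characteristic polynomial of $u_\lambda|_W$ as $t\,\bigl[(t-\lambda)(t^2-t-L\psi(C))-\alpha LC\bigr]$. If instead $z=\beta y$ for some $\beta\in\F$, then $W=\Vect(e_n,y,e_1)$ is $3$-dimensional with characteristic polynomial $(t-\lambda)(t^2-t-\beta LC)-\alpha LC$. In both situations the nontrivial factor has the shape $p(t)-\lambda\,q(t)$ with $p$ monic of degree $3$, $q$ monic of degree $2$, and Euclidean division produces $p(t)=t\,q(t)-\alpha LC$. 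Since this factor divides the characteristic polynomial of $u_\lambda$, it must split for every $\lambda\in\F$; as $|\F|\geq 3$, Lemma \ref{lemma:pol} then forces $q\mid p$, so the constant remainder $-\alpha LC$ vanishes, and $LC\neq 0$ yields $\alpha=0$.

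The main obstacle I anticipate is the first step, namely spotting the right combination: if one uses $\pi'$ together with only a $B_C$ (or only with an $A_L$), the resulting matrix becomes lower triangular after a single permutation of rows and columns, so its characteristic polynomial splits for free and carries no information about $\alpha$. Switching on \emph{both} $A_L$ and $B_C$ with $LC\neq 0$ is what closes the cycle $e_n\to y\to e_1\to\cdots$ and makes $\alpha$ surface as the constant term of the polynomial division by $q$, so that Lemma \ref{lemma:pol} can be put to use.
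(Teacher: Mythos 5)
Your proposal is correct, but it takes a genuinely different route from the paper. The paper's proof is a perturbation argument: it observes that the construction of $\pi'$ could equally well have been carried out with $e_2$ replaced by $e_2+e_1$, yielding a second idempotent $\pi''\in\calS$ with kernel $\Vect(e_2+e_1,e_3,\dots,e_n)$ and range $\F(e_1+\beta e_n)$; subtracting $E_{1,1}+\alpha E_{n,1}$ and adding $E_{1,2}$ (available since $f=\varphi=0$) produces a matrix of $\calM$ with range inside $\F e_n$ and zero trace, which must vanish because $e_n$ is $\calS$-adapted, and this forces $\alpha=\beta=0$. Your approach instead extends the char-poly-plus-Lemma~\ref{lemma:pol} machinery of Section~\ref{section:vanishfgphi} to a larger family $A_L+B_C+(E_{1,1}+\alpha E_{n,1})+\lambda E_{n,n}$; the computations check out (the invariant subspace $W=\Vect(e_n,y,e_1,z)$, the cofactor expansion, the identification $p=tq-\alpha LC$, and the two-case split on whether $y,z$ are linearly independent are all correct), and since $LC\neq 0$ is achievable because $n\geq 3$ and $|\F|\geq 3$ holds, Lemma~\ref{lemma:pol} indeed gives $\alpha=0$. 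The paper's method is shorter and leans on the structural insight that the idempotent $\pi'$ depends on the chosen flag; your method is more computational but more uniform with the rest of the section, avoiding the ``re-run the construction with a perturbed basis'' step. Both are valid; note that the paper's perturbation trick also yields the extra information $\beta=0$ for free, which your argument does not produce, though that extra fact is not needed for the claim.
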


\begin{proof}
We use the observation that the choice of the vectors $e_1,\dots,e_{n-1}$ could have been slightly different,
and in particular we could have used $e_2+e_1$ instead of $e_2$. With this choice, the way we found $\pi'$
can be applied to yield an idempotent $\pi'' \in \calS$ with kernel $\Vect(e_2+e_1,e_3,\dots,e_{n-1},e_n)$ and range $\Vect(e_1+\beta e_n)$
for some $\beta \in \F$. Hence the matrix of $\pi''$ in $(e_1,\dots,e_n)$ equals $E_{1,1}-E_{1,2}+\beta E_{n,1}-\beta E_{n,2}$.
Yet, as now we have $f=0$ and $\varphi=0$, we know that $\calM$ contains $E_{1,2}$, and by adding it and subtracting $E_{1,1}+\alpha E_{n,1}$
we deduce that $\calM$ contains $-\beta E_{n,2}+(\beta-\alpha) E_{n,1}$. Since $e_n$ is $\calS$-adapted this yields
$-\beta=0$ and $\beta-\alpha=0$, and we conclude that $\alpha=0$.
\end{proof}

As a consequence, (A4) is now refined as:
\begin{itemize}
\item[(A4')] The space $\calM$ contains $E_{1,1}$.
\end{itemize}

\subsection{The vanishing of $\psi$}

Now, we can finally obtain that $\psi=0$.

\begin{claim}
The mapping $\psi$ vanishes.
\end{claim}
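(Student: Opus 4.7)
The plan is to exploit the triangularizability of a single well-chosen element of $\calM$ per pair $(L,C)$. For $C\in\F^{n-2}$ and $L\in\Mat_{1,n-2}(\F)$, the already-proven vanishing of $f$, $g$ and $\varphi$ makes
$$M_{L,C}:=B_C+A_L=\begin{bmatrix} 0 & L & 0 \\ \psi(C) & 0 & C \\ 0 & 0 & 0 \end{bmatrix}$$
lie in $\calM$, hence triangularizable. The strategy is to read off an arithmetic obstruction from the characteristic polynomial of this single matrix.

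Next, I will compute $\chi_{M_{L,C}}$ explicitly. The last row of $tI_n-M_{L,C}$ equals $(0,\dots,0,t)$, so expanding along it contributes just a factor $t$ and reduces the problem to the $(n-1)\times(n-1)$ determinant obtained by deleting the last row and column. The latter is a block $2\times 2$ determinant with diagonal blocks $t$ (a scalar) and $tI_{n-2}$, and Schur's complement with respect to the invertible block $tI_{n-2}$ yields
$$\chi_{M_{L,C}}(t)=t^{n-2}\bigl(t^2-L\psi(C)\bigr).$$
Since $M_{L,C}$ is triangularizable, this polynomial splits over $\F$, so $L\psi(C)$ must be a square in $\F$ for every row $L$.

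To conclude, I will fix $C$ and let $L$ vary: if $\psi(C)$ were nonzero, the linear form $L\mapsto L\psi(C)$ on $\Mat_{1,n-2}(\F)$ would be surjective onto $\F$ and would therefore attain a non-square value (non-squares exist because $\car(\F)\neq 2$ makes the squaring map $\F^\times\to\F^\times$ a $2$-to-$1$ group homomorphism). This contradiction forces $\psi(C)=0$ for every $C$, whence $\psi=0$.

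The only mild obstacle is the clean bookkeeping for the characteristic polynomial; once the identity $\chi_{M_{L,C}}(t)=t^{n-2}(t^2-L\psi(C))$ is in hand, the non-square argument is immediate. Notably, no case analysis on the linear (in)dependence of the auxiliary vectors $y=\sum_k C_k e_k$ and $z=\sum_k \psi(C)_k e_k$ is required at this step, and Lemma \ref{lemma:pol} is not needed either: the quadratic factor already isolates exactly the information that contradicts the existence of non-squares in~$\F$.
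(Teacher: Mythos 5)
Your proof is correct, and it takes a genuinely different and simpler route than the paper's. The paper proves the vanishing of $\psi$ by a duality trick: it conjugates the transpose of $\calM$ by the order-reversing permutation matrix to obtain $\calM^\star$, observes that in $\calM^\star$ the roles of $\varphi$ and $\psi$ are interchanged (with the already-established $f=g=\varphi=0$ forcing the analogues of $f$, $g$ to vanish), and then re-runs the entire three-case Lemma~\ref{lemma:pol} machinery from Section~\ref{section:vanishfgphi} on $\calM^\star$. Your argument instead considers the single matrix $A_L+B_C\in\calM$ and computes its characteristic polynomial outright as $t^{n-2}\bigl(t^2-L\psi(C)\bigr)$ (a routine Schur-complement or cofactor computation, valid since $n\geq3$), from which splitness forces $L\psi(C)$ to be a square; varying $L$ over all rows with $\psi(C)\neq 0$ then hits a non-square, which exist because $\F$ is finite of odd characteristic. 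This is shorter, avoids Lemma~\ref{lemma:pol} and the case analysis entirely, and — unlike the paper's proof — does not invoke the prior claim $\alpha=0$ (property (A4')), so it slightly decouples the logical dependencies. In spirit it closely mirrors the paper's own treatment of the $n=2$ base case and of the final claim that $h(U)=0$, where a $2\times2$ reduction combined with the existence of non-squares does the work; you have just noticed that the same device already suffices here.
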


\begin{proof}
Here we will use a symmetry observation.
We consider the permutation matrices $P:=(1_{i+j=n+1})_{1 \leq i,j \leq n} \in \GL_n(\F)$
and $Q:=(1_{i+j=n-1})_{1 \leq i,j \leq n-2} \in \GL_{n-2}(\F)$, and the matrix space $\calM^\star:=P \calM^T P^{-1}$.

By (A4'), we see that $\calM^\star$ contains $E_{n,n}$.
We observe thanks to the vanishing of $\varphi$ and $f$ that, for all $C \in \F^{n-2}$, the space $\calM^\star$ contains
$$B'_C:=\begin{bmatrix}
0 & 0 & 0 \\
0 & 0 & C \\
0 & 0 & 0
\end{bmatrix},$$
whereas from the vanishing of $g$ we see that, for all $L \in \Mat_{1,n-2}(\F)$, the space $\calM^\star$ also contains
$$A'_L:=\begin{bmatrix}
0 & L & 0 \\
0 & 0 & 0 \\
0 & \varphi'(L) & 0
\end{bmatrix} \quad \text{where $\varphi'(L)=(Q\psi(Q L^T))^T$.}$$
Since $\calM^\star$ is weakly triangularizable and has dimension $\frac{n(n+1)}{2}$, we can then directly follow the
line of reasoning of Section \ref{section:vanishfgphi} to find
that $\varphi'$ vanishes. This yields the claimed statement.
\end{proof}

\subsection{Completing the proof}

\begin{claim}
The space $\calM$ contains the unit matrix $E_{1,n}$.
\end{claim}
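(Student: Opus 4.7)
My strategy is to exhibit an element of $\calM$ with last column $e_1$, subtract known elements of $\calM$ to simplify its shape, and then use weak triangularizability together with Lemma \ref{lemma:pol} and the fact that the squaring map is not surjective on $\F$ to force the remainder to be $E_{1,n}$.

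By (A3), some matrix of $\calM$ has last column $(1,0,\ldots,0)^T$. Subtracting appropriate multiples of $E_{1,1}$ (in $\calM$ by (A4')) and of the matrices $E_{1,j}$ with $2\le j\le n-1$ (which lie in $\calM$ because $f=\varphi=0$), as well as the $G_{U_0}$ associated to the upper-triangular part of its middle block, yields a matrix
\[
M_* = \begin{bmatrix} 0 & 0 & 1 \\ C' & U & 0 \\ b & L' & 0 \end{bmatrix} \in \calM
\]
with $U \in \Mat_{n-2}(\F)$ strictly lower-triangular. It suffices to prove that $b=0$, $C'=0$, $L'=0$, and $U=0$.

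Cofactor expansion along the first row of $tI-M_*-\mu E_{1,1}$, combined with a Schur-complement computation on the nilpotent block $tI-U$, gives
\[
\chi_{M_*+\mu E_{1,1}}(t) = t^{n-1}(t-\mu) - bt^{n-2} - L'\operatorname{adj}(tI-U)\,C'.
\]
Lemma \ref{lemma:pol} forces $t^{n-1}$ to divide this, whence $bt^{n-2}+L'\operatorname{adj}(tI-U)C'=0$; comparing degrees yields $b=0$ and $L'\operatorname{adj}(tI-U)C'\equiv 0$. Next I would prove $C'_{j-1}=0$ by induction on $j\in\{2,\ldots,n-1\}$: granted $C'_1=\cdots=C'_{j-2}=0$, a block-triangular cofactor computation (exploiting that rows $2,\ldots,j$ of $tI-M_*$ are supported in the first $j-1$ columns save for a single $-C'_{j-1}$ in row $j$, column $1$) gives
\[
\chi_{M_*+\lambda E_{1,j}}(t) = t^{n-2}\bigl(t^2-\lambda C'_{j-1}\bigr),
\]
and splitting of this polynomial for every $\lambda\in\F$ requires $\lambda C'_{j-1}$ to be a square in $\F$ for every $\lambda$, which is impossible unless $C'_{j-1}=0$ since $\F$ is finite of odd characteristic. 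Applying the same reasoning to $\calM^\star:=P\calM^T P^{-1}$, which is again an optimal weakly triangularizable subspace and in which $P M_*^T P^{-1}$ plays the role of $M_*$ with first column $Q L'^T$, yields $L'=0$.

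Finally, for each $V\in\MatT_{n-2}(\F)$, a direct block computation using $C'=L'=0$ and $b=0$ gives
\[
\chi_{M_*+\lambda G_V}(t) = \bigl(t^2-\lambda h(V)\bigr)\det(tI-U-\lambda V).
\]
The splitting of the first factor for every $\lambda\in\F$ forces $h(V)=0$ by the same squares obstruction; the splitting of the second factor for every $\lambda\in\F$ and every $V\in\MatT_{n-2}(\F)$ amounts to saying that the linear subspace $\F U+\MatT_{n-2}(\F)\subseteq\Mat_{n-2}(\F)$ is weakly triangularizable. By Theorem \ref{theo:dimNRC} its dimension is at most $\binom{n-1}{2}=\dim\MatT_{n-2}(\F)$, so $U\in\MatT_{n-2}(\F)$; combined with $U$ being strictly lower-triangular, $U=0$ and hence $M_*=E_{1,n}$. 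The main obstacle is the inductive cofactor computation; its virtue is that it produces a characteristic polynomial of the very simple form $t^{n-2}(t^2-\lambda c)$, at which point the finite-field squares argument applies cleanly.
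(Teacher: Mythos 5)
Your argument is correct, but it takes a genuinely different route from the paper. The paper proves this claim with a short perturbation argument: it observes that the choice $e_2 \mapsto e_2 + e_1$ is harmless, so the already-established fact that $\calS$ contains every endomorphism vanishing on $\Ker\pi$ and mapping $e_n$ into $\Vect(e_2,\ldots,e_n)$ also holds with $e_2+e_1$ in place of $e_2$; summing the two resulting spaces gives all endomorphisms vanishing on $\Ker\pi$, hence $E_{1,n}\in\calM$. You instead start from the element of $\calM$ with last column $e_1$ furnished by (A3), normalize it to $M_*$ by subtracting $E_{1,1}$, the $E_{1,j}$ and a $G_{U_0}$, and then grind through characteristic-polynomial computations. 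I verified your three displayed formulas by a Schur-complement computation on the invertible (over $\F(t)$) block $\begin{bmatrix} tI-U & 0 \\ -L' & t\end{bmatrix}$: indeed $\chi_{M_*+\mu E_{1,1}}(t)=t^{n-1}(t-\mu)-bt^{n-2}-L'\operatorname{adj}(tI-U)C'$; and once $b=0$, $L'\operatorname{adj}(tI-U)C'\equiv 0$ and $C'_1=\cdots=C'_{j-2}=0$ are in force, one has $e_{j-1}^T(tI-U)^{-1}C'=C'_{j-1}/t$ (nilpotence of $U$ plus the vanishing of the early entries of $C'$), giving $\chi_{M_*+\lambda E_{1,j}}(t)=t^{n-2}(t^2-\lambda C'_{j-1})$ exactly as you claim. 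Your parenthetical justification of that formula is slightly garbled (row $j$ of $tI-M_*$ also carries the diagonal entry $t$ in column $j$, so it is not supported in the first $j-1$ columns), but the formula itself is right. The symmetry step via $\calM^\star=P\calM^TP^{-1}$ is available because $\varphi'=0$ was established in the paper, so $E_{1,j}\in\calM^\star$; and since in $PM_*^TP^{-1}$ the "$L'$" and "$b$" slots are already zero, the needed constraint holds trivially and you can rerun the $C'$-step there. The final step is fine: the set $\{U+\lambda V : \lambda\in\F,\ V\in\MatT_{n-2}(\F)\}$ being triangularizable is equivalent (by scaling) to the linear span $\F U+\MatT_{n-2}(\F)$ being weakly triangularizable, and Theorem~\ref{theo:dimNRC} (for $n\geq 4$; the case $n=3$ is vacuous since then $U$ is $1\times1$ strictly lower, hence zero) forces $U\in\MatT_{n-2}(\F)$, so $U=0$. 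On balance, the paper's proof is shorter and cleaner; yours is heavier but establishes the subsequent claim $h=0$ as a byproduct, and it illustrates that the $E_{1,n}$ and $h$ claims can be handled simultaneously by the same char-poly machinery used in Section~\ref{section:vanishfgphi}.
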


\begin{proof}
We will use another perturbation argument.

As a consequence of the identities $g=0$ and $\psi=0$, along with the presence of $\pi$ in $\calS$, we have found that $\calS$ contains every endomorphism of $V$
that vanishes on $\Ker \pi$ and maps $e_n$ into $\Vect(e_2,\dots,e_n)$.
Yet, once more there is some slack in the choice of $(e_2,\dots,e_{n-1})$, and in particular we can
harmlessly replace $e_2$ with $e_2+e_1$ without changing the other vectors.
Then the previous proof yields that $\calS$ also contains all the endomorphisms of $V$
that vanish on $\Ker \pi$ and map $e_n$ into $\Vect(e_2+e_1,e_3,\dots,e_n)$.

By summing, we deduce that $\calS$ contains all the endomorphisms of $V$ that vanish on $\Ker \pi$,
and in particular $\calM$ contains $E_{1,n}$.
\end{proof}

\begin{claim}
One has $h(U)=0$ for all $U \in \MatT_{n-2}(\F)$.
\end{claim}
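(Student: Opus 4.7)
The plan is to exploit the matrix $E_{1,n}$ that we have just shown lies in $\calM$, and to use it to perturb the matrix $G_U$. For any $U \in \MatT_{n-2}(\F)$ and any $\lambda \in \F$, the sum
$$G_U + \lambda E_{1,n} = \begin{bmatrix} 0 & 0 & \lambda \\ 0 & U & 0 \\ h(U) & 0 & 0 \end{bmatrix}$$
belongs to $\calM$, so the corresponding endomorphism must be triangularizable over $\F$.

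The key computational step is to observe that in this matrix the middle block row and the middle block column contain zeros everywhere except inside the $U$-block, so that the characteristic polynomial factors as
$$\det(tI_{n-2}-U) \cdot \det\begin{bmatrix} t & -\lambda \\ -h(U) & t \end{bmatrix} = \det(tI_{n-2}-U) \cdot (t^2 - \lambda\, h(U)).$$
Since $U$ is upper-triangular, the first factor already splits over $\F$. Therefore the triangularizability assumption forces $t^2-\lambda h(U)$ to split over $\F$ for every $\lambda \in \F$.

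From there I would conclude by the same basic observation that was used in Section \ref{section:n=2}: if $h(U)$ were nonzero, then $\lambda h(U)$ would range over all of $\F$ as $\lambda$ varies, so every element of $\F$ would be a square, contradicting the fact that squaring is not surjective on a finite field of odd characteristic. Hence $h(U)=0$.

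I do not anticipate any serious obstacle here. This is essentially the simplest instance of the perturbation-plus-characteristic-polynomial scheme used throughout Section \ref{section:vanishfgphi}, and it is made particularly clean because the earlier work has already killed $\varphi$, $f$, $g$ and $\psi$, so only the corner scalar $h(U)$ remains to be eliminated, and a single $2\times 2$ corner block suffices to detect it.
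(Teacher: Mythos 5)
Your proof is correct and is essentially the paper's own argument: both examine the effect of $G_U + \lambda E_{1,n}$ on the pair $(e_1, e_n)$ (the paper via the invariant subspace $\Vect(e_1,e_n)$, you via the block factorization of the characteristic polynomial, which amounts to the same $2\times 2$ computation) and then conclude from the non-surjectivity of squaring over a finite field of odd characteristic.
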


\begin{proof}
Let $U \in \MatT_{n-2}(\F)$. Recall the notation $G_U$ from Section \ref{section:simplematrices}.

Let $\lambda \in \F$. Denote by $u$ the endomorphism of $V$ with matrix $G_U+ \lambda E_{1,n}$ in the basis $(e_1,\dots,e_n)$.
Then $u \in \calS$ and hence $u$ is triangularizable. We observe that $u$ leaves $\Vect(e_1,e_n)$ invariant and
the matrix of the induced endomorphism in the basis $(e_1,e_n)$ is $\begin{bmatrix}
0 & \lambda \\
h(U) & 0
\end{bmatrix}$. Hence the latter is triangularizable, and since its characteristic polynomial is $t^2-\lambda h(U)$
we deduce that $\lambda h(U)$ is a square in $\F$. If $h(U) \neq 0$, varying $\lambda$ would yield that all the elements of $\F$
are squares, which is known to be false because $\F$ is finite with odd characteristic. Hence $h(U)=0$.
\end{proof}

Now we can conclude. We have shown that $\calM$ contains $E_{i,j}$ for all $i,j$ in $\lcro 1,n\rcro$ such that $i \leq j$.
Hence $\MatT_n(\F) \subseteq \calM$. Since the dimensions are equal, we deduce that $\calM=\MatT_n(\F)$, which means that
$\calS=\calT_\calF$ for the flag $\calF=(\Vect(e_1,\dots,e_i))_{0 \leq i \leq n}$.
This finishes the inductive step, thereby completing the proof of Theorem \ref{theo:main}.

\end{document}